
\documentclass[reqno]{amsart}
\usepackage{amssymb,amsmath,amsthm,amscd,latexsym,amsfonts}
\usepackage{mathtools}
\usepackage[english]{babel}
\usepackage[T1]{fontenc}
\usepackage{inputenc}
\usepackage[arrow,matrix]{xy}
\usepackage{amsaddr}
\usepackage{graphicx}
\usepackage{cite}
\tolerance=5000 \topmargin -1cm \oddsidemargin=0,5cm
\evensidemargin=-0,2cm \textwidth 15.6cm \textheight 24cm
\linespread{1.0}
\vfuzz2pt 
\hfuzz2pt 
\newtheorem{thm}{Theorem}[section]

\newtheorem{lem}[thm]{Lemma}
\newtheorem{prop}[thm]{Proposition}

\theoremstyle{definition}
\newtheorem{defn}[thm]{Definition}

\theoremstyle{remark}

\numberwithin{equation}{section}


\begin{document}

\title[On Leibniz superalgebras which even part is $\mathfrak{sl}_2$]{On Leibniz superalgebras which even part is $\mathfrak{sl}_2$}

\author{Kh.A.~Khalkulova\textsuperscript{1}, A.Kh.~Khudoyberdiyev\textsuperscript{1,2}}

\address{\textsuperscript{1} Institute of Mathematics Academy of Science of Uzbekistan, 81, Mirzo Ulug'bek street, 100125, Tashkent, Uzbekistan, xalkulova@gmail.com}

\address{\textsuperscript{2} National University of Uzbekistan, 4, University street, 100174, Tashkent,  Uzbekistan, khabror@mail.ru}

\begin{abstract} This article deals with a Leibniz superalgebra $L=L_0\oplus L_1,$ whose even part is a simple Lie algebra $\mathfrak{sl}_2$.
We describe all such Leibniz superalgebras when odd part is an irreducible Leibniz bi-module on $\mathfrak{sl}_2 $.
We show that there exist such Leibniz superalgebra with nontrivial odd part only in case of $dim L_1=2.$
\end{abstract}

\maketitle



\subjclass{17A32, 17A36, 17B30, 13D10}

\keywords{Leibniz algebra, Leibniz superalgebra, simple Lie algebra, Leibniz representation.}


\section{Introduction}

Extensive investigations in Lie algebras theory have led to the appearance of more general
algebraic objects -- Mal'cev algebras, binary Lie algebras, Lie superalgebras, Leibniz algebras and others.

During many years the theory of Lie superalgebras has been actively studied
by many mathematicians and physicists. A systematic exposition of basic of Lie
superalgebras theory can be found in \cite{Kac}. Many works have been devoted to the
study of this topic, but unfortunately most of them do not deal with nilpotent
Lie superalgebras. In works \cite{GL},
\cite{G-K-N} the problem of the description of some
classes of nilpotent Lie superalgebras have been studied.

Leibniz algebras have been first
introduced by Loday in \cite{Loday} as a non-antisymmetric version of Lie algebras. Leibniz superalgebras are generalizations of Leibniz algebras and, on the other hand, they naturally generalize Lie superalgebras. In the description of Leibniz superalgebras structure
the crucial task is to prove the existence of suitable basis (the so-called adapted basis) in which
the multiplication of the superalgebra has the most convenient form.

In the work \cite{G-K-N} the Lie superalgebras with maximal nilindex were classified. Such
superalgebras are two-generated and its nilindex equal to $n + m$ (where $n$ and $m$
are dimensions of even and odd parts, respectively). In fact, there exists unique Lie
superalgebra of maximal nilindex. This superalgebra is filiform Lie superalgebra
(the characteristic sequence equal to $(n-1, 1 | m)$) and we mention about paper \cite{GL},
where some crucial properties of filiform Lie superalgebras are given.

For nilpotent Leibniz superalgebras it turns to be comparatively
easy and was solved in \cite{Alb}. The distinctive property of such Leibniz superalgebras is that
they are single-generated and have the nilindex $n+m+1$.
The next step -- the description of Leibniz superalgebras with
dimensions of even and odd parts, respectively equal to $n$ and $m$, and with nilindex $n + m$ were classified
by applying restrictions the invariant such called characteristic sequences in \cite{FilSup}, \cite{C-G-N-O}, \cite{C-G-O-Kh}, \cite{G-O-Kh}. Solvable and semi-simple Leibniz superalgerbas are not investigated at this time. The first step of this assertion is describe Leibniz superalgebras which even part is a semi-simple Lie algebra. Note that the odd part of the superalgebra can be considered as a representation of the even part. Representation or bimodule of a Leibniz algebra $L$ is defined in \cite{LP_Universal} as a $\mathbb{K}$-module $M$ with two actions -- left and right, satisfying compatibility conditions. In \cite{Barnes} it is established that any simple finite-dimensional Leibniz representation is either symmetric, meaning the left and the right actions differ by sign, or antisymmetric, meaning  the left action is zero.  The classical Weyl's theorem on complete reducibility that claims any finite-dimensional module over a semisimple Lie algebra is a direct sum of simple modules does not generalize even for the simple Leibniz algebras case.

A Lie algebra can be considered as a Leibniz algebra and one can consider Leibniz representation of a Lie algebra. In \cite{LP_Leib_rep} the authors describe the indecomposable objects of the category of Leibniz representations of a Lie algebra and as an example, in case the Lie algebra is $\mathfrak{sl}_2$ the indecomposable objects in that category can be described, whereas for $\mathfrak{sl}_n$ ($n>2$) they claim that it is of wild type.

Our main focus in this work is to describe Leibniz superalgebras even part is isomorphic to the three dimensional simple Lie algebra $\mathfrak{sl}_2.$ If the multiplication of the odd part is zero, then Leibniz superalgebra is isomorpic to Leibniz algebra, i.e., superalgebra with trivial odd part. We show that there exist such Leibniz superalgebra $L=\mathfrak{sl}_2\oplus L_1$ with nontrivial odd part only in case of $dim L_1=2.$

Throughout this work we shall consider spaces, algebras and superalgebras over the field
of complex numbers.

\section{Preliminaries}

In this section we give necessary definitions and preliminary results.

\begin{defn}
    An algebra $(L,[\cdot,\cdot])$ over a field $\mathbb{K}$ is called a \emph{Leibniz algebra} if it is defined by the Leibniz identity
    $$[x,[y,z]]=[[x,y],z] - [[x,z],y], \ \mbox{for all}\ x,y,z \in L.$$
\end{defn}

In fact for Leibniz algebra $L$ the ideal
$I= span \{ [x,x] \ | \  x\in L \}$ coincides with the space spanned
by squares of elements of $L.$ Moreover, it is readily to see that
this ideal belongs to right annihilator, that is $[L,I]=0$. Note
that the ideal $I$ is the minimal ideal with respect to the
property that the quotient algebra $L/I$ is a Lie algebra.

\begin{defn} Let $L$ be a Leibniz algebra, $M$ be a $\mathbb{K}$ vector space and bilinear maps $[-,-]:L\times M\rightarrow M$ and $[-,-]:M\times L\rightarrow M$ satisfy the following three axioms:
\begin{equation}\label{eq_module} \begin{array}{c}[m,[x,y]]=[[m,x],y]-[[m,y],x],\\[1mm]
[x,[m,y]]=[[x,m],y]-[[x,y],m],\\[1mm]
[x,[y,m]]=[[x,y],m]-[[x,m],y]. \end{array}\end{equation}

Then $M$ is called a \emph{representation} of the Leibniz algebra $L$ or an $L-$\emph{bimodule}.
\end{defn}

\begin{defn} A $\mathbb{Z}_2$-graded vector space $L=L_0\oplus L_1$  is called a Leibniz superalgebra if it is equipped with a product $[-,-]$ which satisfies the following conditions:

\[ \big[x,[y,z]\big]=\big[[x,y],z\big] - (-1)^{\alpha \beta}\big[[x,z],y\big]- \mbox{Leibniz superidentity} \]  for all $x\in L, y\in L_{\alpha}, z\in L_{\beta}.$
\end{defn}

The vector spaces $L_0$ and $L_1$ are said to
be the even and odd parts of the superalgebra $L$,
respectively. It is obvious that $L_0$ is a Leibniz algebra and $L_1$ is a representation of $L_0.$
Note that if in Leibniz superalgebra $L$ the identity
$$[x,y]=-(-1)^{\alpha\beta} [y,x]$$ holds for any $x \in
L_{\alpha}$ and $y \in L_{\beta},$ then the Leibniz superidentity
can be transformed into the Jacobi superidentity. Thus, Leibniz
superalgebras are a generalization of Lie superalgebras and
Leibniz algebras.


\begin{defn} The set $$\mathcal{R}(L)=\left\{ z\in L\ |\ [L,
z]=0\right\}$$ is called the right annihilator  of a superalgebra
$L.$
\end{defn}

Using the Leibniz superidentity it is easy to see that
$\mathcal{R}(L)$ is an ideal of the superalgebra $L$. Moreover,
the elements of the form $[a,b]+(-1)^{\alpha \beta}[b,a],$ ($a \in
L_{\alpha}, \ b \in L_{\beta}$) belong to $\mathcal{R}(L)$.

\begin{defn}  An $L$-bimodule is called \emph{simple} or \emph{irreducible}, if it does not admit non-trivial $L$-subbimodules. An $L$-bimodule
 is called \emph{indecomposable}, if it is not a direct sum of its $L$-subbimodules.
An $L$-bimodule M is called \emph{completely reducible} if for any $L$-subbimodule $N$ there exists a complementing
$L$-subbimodule $N'$ such that $M = N \oplus N'.$
\end{defn}


In [3] it is proved that a finite-dimensional simple $L$-bimodule is either symmetric or antisymmetric for any finite-dimensional Leibniz algebra $L$.

\begin{lem} \cite{Barnes} \label{LemBar} Let $L$ be a finite-dimensional Leibniz algebra, and let $M$ be a finite-dimensional
simple $L$-bimodule. Then either $[L,M] =
0$ or $[x,m] = -[m,x]$ for all $x \in L$ and $m \in M.$
\end{lem}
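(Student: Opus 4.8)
The plan is to produce, straight from the bimodule axioms in \eqref{eq_module}, a canonical subbimodule of $M$ and then use irreducibility. I would set
\[
N \;=\; \spann\bigl\{\,[x,m]+[m,x]\;:\;x\in L,\ m\in M\,\bigr\}\ \subseteq\ M .
\]
Everything then reduces to two claims: (a) $L$ acts trivially on $N$ from the left, i.e.\ $[L,N]=0$; and (b) $N$ is stable under the right action, $[N,L]\subseteq N$. Granting these, $N$ is an $L$-subbimodule, so by simplicity of $M$ either $N=0$ or $N=M$. If $N=0$, then $[x,m]+[m,x]=0$ for all $x\in L$, $m\in M$, which is exactly the symmetric alternative $[x,m]=-[m,x]$. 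If $N=M$, then $[L,M]=[L,N]=0$, the antisymmetric alternative. This yields the dichotomy.

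For claim (a) I would simply add the second and third identities of \eqref{eq_module}. For $y\in L$ and a generator $[x,m]+[m,x]$ of $N$, the third identity gives $[y,[x,m]]=[[y,x],m]-[[y,m],x]$ and the second gives $[y,[m,x]]=[[y,m],x]-[[y,x],m]$; adding these, $\bigl[y,[x,m]+[m,x]\bigr]=0$. Since $y$ and the generators are arbitrary, $[L,N]=0$.

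Claim (b) is the step that needs care. For a generator $n=[m,y]+[y,m]$ and $z\in L$ I would expand $[n,z]=[[m,y],z]+[[y,m],z]$ by applying the first identity of \eqref{eq_module} to the term $[[m,y],z]$ and the second identity to the term $[[y,m],z]$, which produces four terms, and then regroup them as
\[
[n,z]\;=\;\bigl([m,[y,z]]+[[y,z],m]\bigr)\;+\;\bigl([[m,z],y]+[y,[m,z]]\bigr) .
\]
The first bracket is the symmetrization of $m\in M$ with $[y,z]\in L$, and the second is the symmetrization of $[m,z]\in M$ with $y\in L$; both lie in $N$, so $[n,z]\in N$. The main obstacle is exactly getting this pairing right: the four terms must be matched into two symmetrizations, one ``based at'' $m$ and one ``based at'' $[m,z]$, and for that it is essential to use identity (1) on the first summand and identity (2) on the second rather than interchanging them. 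I note that neither step actually uses finite-dimensionality of $L$ or $M$; that hypothesis enters only in the finer structural results about simple Leibniz bimodules (for instance in describing which Lie-module structures arise in the symmetric case over the associated Lie algebra $L/I$).
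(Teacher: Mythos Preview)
The paper does not supply its own proof of this lemma; it is quoted as a result from \cite{Barnes}. Your argument is correct and is essentially the standard one: the subspace $N=\spann\{[x,m]+[m,x]\}$ is annihilated by the left action (adding axioms (2) and (3) of \eqref{eq_module}) and is stable under the right action (your regrouping via axioms (1) and (2)), hence is a subbimodule, and simplicity forces the dichotomy. Your remark that finite-dimensionality is not actually used in this step is also accurate.
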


An $L$-bimodule with trivial left actions is called \textit{symmetric}. If the left action is the negative of the right action, then it is called \textit{antisymmetric}.

In particular, in case of $L$ is isomorphic to three dimensional simple Lie algebra $\mathfrak{sl}_2$ we have that there exist a basis $\{x_0, x_1, \dots, x_n\}$ of $M$ such that one of the table of multiplication is hold:
\begin{equation}\label{eq_N_1} N_1:\left\{\begin{array}{lll}
[x_i,h]=(n-2i)x_i,& [h,x_i]=-(n-2i)x_i\\[1mm]
[x_i,f]=x_{i+1},& [f,x_i]=-x_{i+1},\\[1mm]
[x_i,e]=-i(n-i+1)x_{i-1},& [e,x_i]=i(n-i+1)x_{i-1}\\[1mm]
\end{array}\right.\end{equation}

\begin{equation} \label{eq_N_2} N_2:\left\{\begin{array}{lll}
[x_i,h]=(n-2i)x_i,& [h,x_i]=0,\\[1mm]
[x_i,f]=x_{i+1},& [f,x_i]=0,\\[1mm]
[x_i,e]=-i(n-i+1)x_{i-1},& [e,x_i]=0.
\end{array}\right.\end{equation}

%

In \cite{Kurb} it is studied indecomposable complex finite-dimensional Leibniz algebra bimodule over $\mathfrak{sl}_2$ that as a Lie algebra module is split into a direct sum of two simple $\mathfrak{sl}_2$-modules and it is proved that in this case there are only two indecomposable Leibniz $\mathfrak{sl}_2$-bimodules.

 \begin{thm} \cite{Kurb}\label{thmkurb}. An $\mathfrak{sl}_2$-module $M=X\oplus Y$, where $X$ and $Y$ are simple $\mathfrak{sl}_2$-modules is indecomposable as a Leibniz $\mathfrak{sl}_2$-bimodule if and only if $dim X-dim Y=2.$ Moreover, upto $\mathfrak{sl}_2$-bimodule isomorphism there are only two indecomposable $\mathfrak{sl}_2$-bimodules, which in basis $\{x_0,x_1,\dots,x_n,y_0,\dots,y_{n-2}\}$ have the following brackets:
\[ M_1:\begin{array}{lll}
[x_i,h]=(n-2i)x_i,& [h,x_i]=-(n-2i)x_i-2iy_{i-1},\\[1mm]
[x_i,f]=x_{i+1},& [f,x_i]=-x_{i+1}+y_{i},\\[1mm]
[x_i,e]=-i(n-i+1)x_{i-1},& [e,x_i]=i(n-i+1)x_{i-1}+i(i-1)y_{i-2},\\[1mm]
[y_j,h]=(n-2-2j)y_j,& [h,y_j]=0,\\[1mm]
[y_j,f]=y_{j+1},& [f,y_j]=0,\\[1mm]
[y_j,e]=-j(n-j-1)y_{j-1},& [e,y_j]=0.\\[1mm]
\end{array}\]
\[M_2:\begin{array}{lll}
[x_i,h]=(n-2i)x_i,& [h,x_i]=0,\\[1mm]
[x_i,f]=x_{i+1},& [f,x_i]=0,\\[1mm]
[x_i,e]=-i(n-i+1)x_{i-1},& [e,x_i]=0,\\[1mm]
[y_j,h]=(n-2-2j)y_j,& [h,y_j]=2(n-j-1)x_{j+1}-(n-2j-2)y_{j},\\[1mm]
[y_j,f]=y_{j+1},& [f,y_j]=x_{j+2}-y_{i+1},\\[1mm]
[y_j,e]=-j(n-j-1)y_{j-1},& [e,y_j]=(n-j-1)((n-j)x_{j}+jy_{j-1}).\\[1mm]
\end{array}\]
\end{thm}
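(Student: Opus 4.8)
The plan is to analyse an arbitrary Leibniz $\mathfrak{sl}_2$-bimodule structure on $M=X\oplus Y$ through its canonical symmetric/antisymmetric filtration, and to reduce the classification to a single equivariant Hom-space computation.

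First I would record two general facts about a Leibniz bimodule $M$ over a Lie algebra $\mathfrak g$. The first identity of \eqref{eq_module} together with antisymmetry of the bracket on $\mathfrak g$ shows that $g\mapsto(m\mapsto[m,g])$ is an anti-representation, so the right multiplication makes $M$ an honest $\mathfrak g$-module; this is the $\mathfrak{sl}_2$-module structure of the statement, which by hypothesis is $X\oplus Y$. Next, set $M^{0}=\{m\in M:[g,m]=0\text{ for all }g\in\mathfrak g\}$; the second identity of \eqref{eq_module} shows $M^{0}$ is a (necessarily symmetric) sub-bimodule, and the second and third identities show $[g,m]+[m,g]\in M^{0}$ for all $g,m$, so $M/M^{0}$ is antisymmetric. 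Since $\mathfrak{sl}_2$ is semisimple, the exact sequence of $\mathfrak g$-modules $0\to M^{0}\to M\to M/M^{0}\to 0$ splits, and I fix a splitting $M=M_a\oplus M_s$ with $M_s=M^{0}$ symmetric and $M_a\cong M/M^{0}$ antisymmetric.

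Next I would reduce to a cocycle condition. In a symmetric or antisymmetric bimodule every $\mathfrak g$-submodule is a sub-bimodule, so when $M^{0}\in\{0,M\}$ the decomposition $M=X\oplus Y$ is already a direct sum of sub-bimodules. So assume $M^{0}$ is proper and non-zero; if $X\not\cong Y$ (in particular if $\dim X\neq\dim Y$) the only $\mathfrak{sl}_2$-submodules of $M$ are $0,X,Y,M$, whence $\{M_a,M_s\}=\{X,Y\}$. The bimodule is then determined by the linear map $\psi\colon\mathfrak g\otimes M_a\to M_s$, $\psi_g(a)=[g,a]+[a,g]$, via $[g,a]=-[a,g]+\psi_g(a)$ and $[g,s]=0$. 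A direct computation — the one place where all three identities of \eqref{eq_module} are used fully — shows that these formulas define a Leibniz bimodule if and only if $g\mapsto\psi_g$ is a homomorphism of $\mathfrak g$-modules $\mathfrak g\to\Hom(M_a,M_s)$ with $\mathfrak g$ carrying the adjoint action. Since $X\not\cong Y$ are simple, $\Hom_{\mathfrak g}(M_a,M_s)=0$, so the complement $M_a$ and the map $\psi$ are canonical, and (using Krull--Schmidt and the list of submodules) $M$ is decomposable precisely when $\psi=0$; thus $M$ is indecomposable if and only if $\Hom_{\mathfrak g}\!\big(\mathfrak g,\Hom(M_a,M_s)\big)\neq 0$ (every non-zero element of that space realising a bimodule structure in which $M^{0}$ is proper and non-zero).

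Finally I would evaluate this Hom-space and identify the models. Writing $X\cong V(p)$, $Y\cong V(q)$ for the irreducible $\mathfrak{sl}_2$-modules of dimensions $p+1$, $q+1$, self-duality and Clebsch--Gordan give $\Hom(M_a,M_s)\cong V(p)\otimes V(q)\cong\bigoplus_{k=0}^{\min(p,q)}V(p+q-2k)$, a multiplicity-free sum; as $\mathfrak g\cong V(2)$, this space is non-zero — and then one-dimensional — exactly when $p+q\geq 2$ is even with $|p-q|\leq 2$, i.e.\ (excluding $|p-q|=0$) when $|p-q|=2$, i.e.\ $|\dim X-\dim Y|=2$. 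In that case the automorphism group $\mathbb C^{\times}\times\mathbb C^{\times}$ of $M_a\oplus M_s$ rescales a non-zero $\psi$ through all non-zero scalars, so for each of the two admissible choices $\{M_a,M_s\}=\{X,Y\}$ — equivalently, according to whether $M^{0}$ is the larger or the smaller of $X,Y$ — there is, up to isomorphism, exactly one indecomposable bimodule; taking $X$ to be the larger module, weight bases together with the unique normalised equivariant $\psi$ reproduce the brackets of $M_1$ when $M^{0}=Y$ and of $M_2$ when $M^{0}=X$. It remains to dispose of $X\cong Y$ for the ``only if'': then $M\cong V(p)^{\oplus 2}$, the same analysis applies with $M_a\cong M_s\cong V(p)$, but now $\Hom_{\mathfrak g}(M_a,M_s)=\mathbb C$ and the subspace $\{g\mapsto\sigma\circ\rho_g:\sigma\in\operatorname{End}_{\mathfrak g}V(p)\}$ of data that become trivial after changing the complement already contains the representation map $g\mapsto\rho_g$, which spans the one-dimensional $\Hom_{\mathfrak g}(\mathfrak g,\operatorname{End}V(p))$; hence every $\psi$ is trivialisable and $M$ is decomposable. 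The hard part will be the cocycle computation of the third paragraph — verifying that the three Leibniz identities really do collapse to the single equivariance condition on $\psi$, with the signs coming from the anti-representation tracked carefully — together with the bookkeeping needed to match the abstract one-parameter datum to the explicit multiplication tables $M_1$ and $M_2$.
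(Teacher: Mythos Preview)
The paper does not actually prove this theorem: it is quoted verbatim from \cite{Kurb} and used as a black box, so there is no ``paper's own proof'' to compare against. Your argument, however, is a complete and correct proof, and it is worth recording how it differs in spirit from the explicit-basis computations that pervade both this paper and the surrounding literature.

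Your route is structural rather than computational. Instead of writing $[g,x_i]$ and $[g,y_j]$ with undetermined coefficients and grinding through the identities \eqref{eq_module}, you exploit the canonical two-step filtration $0\subset M^{0}\subset M$ (symmetric sub-bimodule, antisymmetric quotient) available over any Lie algebra, split it by semisimplicity of $\mathfrak{sl}_2$, and reduce the entire classification to computing the single space $\Hom_{\mathfrak{sl}_2}\!\big(\mathfrak{sl}_2,\Hom(M_a,M_s)\big)$. The verification that identities two and three of \eqref{eq_module} collapse to $\mathfrak{sl}_2$-equivariance of $\psi\colon g\mapsto\big(a\mapsto[g,a]+[a,g]\big)$ is the only real computation, and it is short; after that, Clebsch--Gordan and self-duality give the dimension constraint $|\dim X-\dim Y|=2$ and the one-dimensionality of the parameter space immediately, with the $\mathbb C^{\times}\times\mathbb C^{\times}$ automorphism group absorbing the scalar. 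Your treatment of the degenerate case $X\cong Y$ --- showing that the freedom in choosing the complement $M_a$ already trivialises the unique candidate $\psi$ --- is the one point that requires care, and you handle it correctly.

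What your approach buys is transparency and generality: the same argument would classify indecomposable Leibniz bimodules over any semisimple $\mathfrak g$ whose underlying right module has two simple summands, reducing everything to a branching question for $\mathfrak g\hookrightarrow\Hom(V,W)$. What a direct computation in the style of the cited source buys is the explicit multiplication tables $M_1$, $M_2$ with no further work; in your approach the final ``bookkeeping'' step --- writing down the normalised equivariant $\psi\in\Hom_{\mathfrak{sl}_2}(V(2),\Hom(V(n),V(n{-}2)))$ in weight bases and reading off the brackets --- still has to be done by hand, and you are right to flag it as the place where most of the remaining effort lies.
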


Following theorem generalize of Theorem \ref{thmkurb} in case of $M$ is a direct sum of $k$ simple $\mathfrak{sl}_2$-modules.

\begin{thm}\cite{Kurb2}\label{thmkurb2} Let $M$ be an $\mathfrak{sl}_2$-bimodule and as a right $\mathfrak{sl}_2$-module let it decompose as $M = V_1\oplus V_2 \oplus \dots \oplus V_n,$ where $V_k$ are simple $\mathfrak{sl}_2$-modules with base $\{v_0^i, \dots, v_0^i\},$ $1 \leq k \leq n$
and $dimV_1 \geq dimV_2 \geq \dots \geq dimV_n.$ Then $M$ is an indecomposable Leibniz $\mathfrak{sl}_2$-bimodule only if $dimV_{k} - dimV_{k+1} = 2$ for all $1 \leq k \leq n - 1.$ Moreover, up to $\mathfrak{sl}_2$-bimodule isomorphism there are exactly two indecomposable
$\mathfrak{sl}_2$-bimodules:
\small{\[ M_3:\begin{array}{lll}
[v_i^{2p-1},h]=(n-4p+4-2i)v_i^{2p-1},& [h,v_i^{2p-1}]=0,\\[1mm]
[v_i^{2p-1},f]=v_{i+1}^{2p-1},& [f,v_i^{2p-1}]=0,\\[1mm]
[v_i^{2p-1},e]=-i(n-4p+5-i)v_{i-1}^{2p-1},& [e,v_i^{2p-1}]=0,\\[1mm]
[v_i^{2p},h]=(n-4p+2-2i)v_i^{2p},& [h,v_i^{2p}]=2(n-2p-i+3)v_{i+1}^{2p-1}-(n-2p-2i+2)v_{i+1}^{2p}-2iv_{i-1}^{2p+1},\\[1mm]
[v_i^{2p},f]=v_{i+1}^{2p},& [f,v_i^{2p-1}]=v_{i+2}^{2p-1}-v_{i+1}^{2p}+v_{i}^{2p+1},\\[1mm]
[v_i^{2p},e]=-i(n-4p+1-i)v_{i-1}^{2p},& [e,v_i^{2p}]=(n-2p-i+3)((n-2p-i+4)v_{i}^{2p-1}+iv_{i-1}^{2p})+i(i-1)v_{i-2}^{2p+1},\\[1mm]
[v_i^{2p+1},h]=(n-4p+1-i)v_{i}^{2p+1},& [h,v_i^{2p+1}]=0,\\[1mm]
[v_i^{2p+1},f]=v_{i+1}^{2p+1},& [f,v_i^{2p+1}]=0,\\[1mm]
[v_i^{2p+1},e]=-i(n-4p-1-i)v_{i-1}^{2p+1},& [e,v_i^{2p+1}]=0.\\[1mm]
\end{array}\]}
for all $1\leq p\leq \frac{k}{2},$

\small{\[M_4:\begin{array}{lll}
[v_i^{1},h]=(n-2i)v_i^{1},& [h,v_i^{1}]=-(n-2i)v_i^1-2iv_{i-1}^2,\\[1mm]
[v_i^{1},f]=v_{i+1}^{1},& [f,v_i^{1}]=-v_{i+1}^1+v_i^2,\\[1mm]
[v_i^{1},e]=-i(n-i+1)v_{i-1}^{1},& [e,v_i^{1}]=i(n-i+1)v_{i-1}^1+i(i-1)v_{i-2}^2,\\[1mm]
[v_i^{2p},h]=(n-4p+2-2i)v_i^{2p},& [h,v_i^{2p}]=0,\\[1mm]
[v_i^{2p},f]=v_{i+1}^{2p},& [f,v_i^{2p-1}]=0,\\[1mm]
[v_i^{2p},e]=-i(n-4p+1-i)v_{i-1}^{2p},& [e,v_i^{2p}]=0,\\[1mm]
[v_i^{2p+1},h]=(n-4p-2i)v_{i}^{2p+1},& [h,v_i^{2p+1}]=(n-4p-i+1)v_{i+1}^{2p}-(n-4p-2i)v_{i+1}^{2p+1}-2iv_{i-1}^{2p+2},\\[1mm]
[v_i^{2p+1},f]=v_{i+1}^{2p+1},& [f,v_i^{2p+1}]=v_{i+2}^{2p}-v_{i+1}^{2p+1}+v_{i}^{2p+2},\\[1mm]
[v_i^{2p+1},e]=-i(n-4p-1-i)v_{i-1}^{2p+1},& [e,v_i^{2p+1}]=(n-4p-i+1)((n-4p-i+2)v_{i}^{2p}+iv_{i-1}^{2p+1})+i(i-1)v_{i-2}^{2p+2}.\\[1mm]
\end{array}\]}
for all $1\leq p\leq \frac{k-1}{2}$, where $n=dimV_1.$
\end{thm}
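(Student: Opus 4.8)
The plan is to reduce the classification to a small amount of $\mathfrak{sl}_2$-representation theory together with a Clebsch--Gordan count, and then to match the two surviving families with the tables $M_3$ and $M_4$.

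\emph{Step 1: a normal form for the bimodule.} Write $L_x=[x,-]$ and $R_x=[-,x]$ for the left and right actions of $x\in\mathfrak{sl}_2$ on $M$. By Weyl's theorem the underlying right $\mathfrak{sl}_2$-module is semisimple, which is exactly the hypothesis. Adding the second and third identities of \eqref{eq_module} yields $\big[x,\,[y,m]+[m,y]\big]=0$ for all $x,y\in\mathfrak{sl}_2$ and $m\in M$; hence $M^{\mathrm{sym}}:=\bigcap_{x}\Ker L_x$ contains the image of each map $m\mapsto[y,m]+[m,y]$, and by the second identity it is $R$-stable, so it is a sub-bimodule on which the left action is trivial (a symmetric bimodule), and $M/M^{\mathrm{sym}}$ is antisymmetric. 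Using Weyl's theorem again, choose a right-submodule $U$ with $M=U\oplus W$ where $W:=M^{\mathrm{sym}}$. On $W$ the left action vanishes; on $U$ the $U\to U$ component of $L_x$ must equal $-R_x$ (because $L_x+R_x$ maps $M$ into $W$), so $L_x|_U=-R_x+\psi_x$ with $\psi_x\colon U\to W$. A short further manipulation of \eqref{eq_module} shows that $\psi\colon\mathfrak{sl}_2\to\Hom_{\mathbb{C}}(U,W)$ is a homomorphism of $\mathfrak{sl}_2$-modules, $\mathfrak{sl}_2$ acting by the adjoint representation, and conversely every such $\psi$ defines a Leibniz bimodule. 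So $M$ is encoded by a triple $(U,W,\psi)$ with $\psi\in\Hom_{\mathfrak{sl}_2}\!\big(\mathfrak{sl}_2,\Hom_{\mathbb{C}}(U,W)\big)$.

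\emph{Step 2: which links can occur.} Two triples $(U,W,\psi)$ and $(U,W,\psi')$ give isomorphic bimodules exactly when $\psi-\psi'$ has the form $x\mapsto\beta\circ R_x$ for a right-module map $\beta\colon U\to W$ (replace $U$ by the graph of $\beta$). Decompose $U$ and $W$ into simple $\mathfrak{sl}_2$-modules $V_p$ (throughout Step 2, $V_p$ is the simple of highest weight $p$, of dimension $p+1$), so $\psi$ is a matrix of maps $V_p\to V_q$. Since $\Hom_{\mathbb{C}}(V_p,V_q)\cong V_p\otimes V_q$, the adjoint module $V_2$ occurs in it if and only if $p\equiv q\pmod 2$ and $|p-q|\le 2$; and when $p=q\ge 1$ the corresponding entry of $\psi$ is a scalar multiple of $x\mapsto R_x$, hence a coboundary that can be deleted. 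Equivalently, $\mathrm{Ext}^1$ in the category of Leibniz $\mathfrak{sl}_2$-bimodules between the simple antisymmetric bimodule on $V_p$ and the simple symmetric bimodule on $V_q$ is one-dimensional when $|p-q|=2$ and zero otherwise.

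\emph{Step 3: indecomposability forces a chain.} Build the bipartite graph whose vertices are the simple summands of $U$ and of $W$, with an edge wherever the corresponding link of $\psi$ survives Step 2. Since $W=M^{\mathrm{sym}}$ and $U\cong M/M^{\mathrm{sym}}$ are canonical, any direct-sum decomposition of $M$ as a bimodule splits $U$ and $W$ compatibly and hence disconnects the graph; so $M$ is indecomposable iff the graph is connected. An edge joins $V_p$ and $V_q$ only if $|p-q|=2$; and if some highest weight occurs with multiplicity on one side, the general linear group acting on its multiplicity space --- a subgroup of the endomorphism algebra of the antisymmetric, resp.\ symmetric, part --- lets one copy be split off, so an indecomposable $M$ is multiplicity-free. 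A multiplicity-free connected bipartite graph whose edges join only highest weights differing by $2$ is a path on vertices carrying the highest weights $n,n-2,\dots,n-2(k-1)$, the two sides alternating; this is precisely the condition $\dim V_1>\dim V_2>\cdots$ with consecutive differences $2$, and there are exactly two colourings according to whether the summand of largest dimension is symmetric or antisymmetric. For each colouring the only remaining freedom is one nonzero scalar per edge, all normalisable to $1$; computing the Clebsch--Gordan coefficients that realise $\psi$ along the path then reproduces exactly the tables $M_3$ and $M_4$, which are indecomposable since their linking graph, a path, is connected.

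\emph{Main obstacle.} The delicate step is Step 3: proving that indecomposability is equivalent to connectedness of the linking graph and forces multiplicity-freeness requires controlling how an arbitrary bimodule decomposition interacts with the canonical filtration $0\subset M^{\mathrm{sym}}\subset M$ and running the Krull--Schmidt bookkeeping with the endomorphism algebras of the antisymmetric and symmetric parts. A second, purely computational, cost is reading off the explicit coefficients of $M_3$ and $M_4$ from the Clebsch--Gordan data and verifying \eqref{eq_module}. An alternative that avoids the conceptual argument is elementary but laborious: fix an $\mathfrak{sl}_2$-adapted basis on each simple summand, write $[h,-]$, $[e,-]$, $[f,-]$ as matrices in those bases, and solve the (large but linear) system imposed by \eqref{eq_module} directly --- presumably the route taken in \cite{Kurb2}.
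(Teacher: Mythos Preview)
The paper does not prove Theorem~\ref{thmkurb2}: it is quoted from \cite{Kurb2} and used as a black box for the results of Section~3, so there is no proof in this paper to compare your attempt against.

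Judged on its own, your outline is essentially correct. Steps~1 and~2 are accurate: the reduction to a triple $(U,W,\psi)$ with $\psi\in\Hom_{\mathfrak{sl}_2}\!\big(\mathfrak{sl}_2,\Hom_{\mathbb C}(U,W)\big)$ modulo coboundaries $x\mapsto\beta R_x$ is the right normal form, and Clebsch--Gordan does pin the surviving links to simples whose highest weights differ by~$2$. The soft spot is exactly the one you flag in Step~3: the claim that the $\GL$-action on multiplicity spaces always lets one split off a copy when a weight repeats is not automatic and, as stated, is a gap. A clean way to close it is to observe that after Steps~1--2 the remaining data --- multiplicity spaces $A_p$ for $V_p\subset U$ and $B_q$ for $V_q\subset W$, together with linear maps $A_p\to B_{p\pm 2}$, modulo $\prod\GL(A_p)\times\prod\GL(B_q)$ --- is precisely a representation of a quiver whose underlying graph is a disjoint union of type~$A_\infty$ paths (vertices $(p,U)$ and $(q,W)$, edges $(p,U)$---$(p\pm2,W)$). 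Gabriel's theorem, or the elementary classification for type~$A$, then forces every finite-dimensional indecomposable to be an interval module with all multiplicities equal to~$1$: a chain of simples with consecutive dimension gaps~$2$, alternating between the symmetric and antisymmetric sides. The two possible colourings of the largest summand give $M_3$ (largest simple symmetric) and $M_4$ (largest simple antisymmetric), and the explicit tables follow from writing out the unique, up to scale, equivariant embeddings $\mathfrak{sl}_2\hookrightarrow\Hom(V_p,V_{p\pm2})$.
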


\section{Main part}

In this section we describe Leibniz superalgebra which even part is $\mathfrak{sl}_2$.

\begin{lem}\label{lemm3.1}  Let $L = L_0\oplus L_1$ be a Leibniz superalgebra, such that $L_0$ is a semi-simple Lie algebra. Then $[x,y]=[y,x]$ for any $x,y \in L_1.$
\end{lem}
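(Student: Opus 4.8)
The plan is to use that, by the $\mathbb{Z}_2$-grading, the product of two odd elements lies in $L_0$, together with the already recorded fact that symmetrized products land in the right annihilator $\mathcal{R}(L)$; the semisimplicity of $L_0$ will then force the relevant element of $\mathcal{R}(L)\cap L_0$ to vanish, because it turns out to be central in $L_0$.

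Concretely, fix $x,y\in L_1$. Since $[L_1,L_1]\subseteq L_0$, both $[x,y]$ and $[y,x]$ belong to $L_0$, hence so does $c:=[x,y]-[y,x]$. Next I would invoke the observation recorded right after the definition of $\mathcal{R}(L)$: for $a\in L_\alpha$, $b\in L_\beta$ one has $[a,b]+(-1)^{\alpha\beta}[b,a]\in\mathcal{R}(L)$. Taking $a=x$, $b=y$ (so $\alpha=\beta=1$ and $(-1)^{\alpha\beta}=-1$) yields exactly $c=[x,y]-[y,x]\in\mathcal{R}(L)$. If one prefers not to quote this, it is a one-line check: expanding $[w,[x,y]]$ and $[w,[y,x]]$ by the Leibniz superidentity, the right-hand sides cancel, so that $[w,c]=0$ for every $w\in L$.

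Finally, from $c\in\mathcal{R}(L)$ we get in particular $[z,c]=0$ for all $z\in L_0$; since $L_0$ is a Lie algebra this says precisely that $c\in\cent(L_0)$, and since $L_0$ is semisimple, $\cent(L_0)=0$. Hence $c=0$, i.e.\ $[x,y]=[y,x]$. There is essentially no serious obstacle here: the only points needing a little care are the sign in the super-symmetrization identity (two odd elements produce a \emph{difference}, which is what the lemma asserts vanishes) and the elementary fact that an element of a semisimple Lie algebra killed by the whole algebra under the bracket must lie in the center, hence is zero.
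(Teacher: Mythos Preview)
Your argument is correct and is essentially the same as the paper's own proof: the paper also observes that $[x,y]-[y,x]\in\mathcal{R}(L)$, hence lies in the center of the semisimple Lie algebra $L_0$, and therefore vanishes. You have simply been a bit more explicit about the sign and about why elements of $\mathcal{R}(L)\cap L_0$ are central in $L_0$.
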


\begin{proof} Note that for any $x,y \in L_1$  an element $[x,y]-[y,x]$ belongs to the right annihilator of $L.$ Consequantly,
$[x,y]-[y,x]$ belongs to the center of the semi-simple Lie algebra $L_0.$ Since the center of semi-simple Lie algebra is zero we have that
$[x,y]=[y,x].$ \end{proof}

\begin{lem} \label{lem lie} Let $L = L_0\oplus L_1$ be a Leibniz superalgebra, such that $L_0$ is a semi-simple Lie algebra and $[L_0,L_1]=0$, then  $[L_1,L_1]=0.$
\end{lem}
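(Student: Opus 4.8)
The plan is to push the hypothesis $[L_0,L_1]=0$ through the Leibniz superidentity and reduce the statement to the fact that a semisimple Lie algebra has trivial center. First note that, by the $\mathbb{Z}_2$-grading, $[L_1,L_1]\subseteq L_0$; hence it suffices to show that every element of $[L_1,L_1]$ lies in the center of $L_0$, which is $0$ because $L_0$ is semisimple.

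The key computation is an instance of the Leibniz superidentity applied to a triple $(z,x,y)$ with $z\in L_0$ and $x,y\in L_1$. Since the second and third slots are both odd, the sign $(-1)^{\alpha\beta}$ equals $-1$, so the identity reads
\[ [z,[x,y]]=[[z,x],y]+[[z,y],x]. \]
Now $[z,x]$ and $[z,y]$ are both values of the \emph{left} action of $L_0$ on the bimodule $L_1$, hence vanish by hypothesis; therefore $[z,[x,y]]=0$. As $z\in L_0$ and $x,y\in L_1$ are arbitrary, this yields $[L_0,[L_1,L_1]]=0$.

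It remains to conclude: if $w\in[L_1,L_1]\subseteq L_0$, then $[z,w]=0$ for all $z\in L_0$, and since $L_0$ is a Lie algebra we also get $[w,z]=-[z,w]=0$, so $w\in\cent(L_0)=0$. Hence $[L_1,L_1]=0$. The argument presents essentially no obstacle; the only point demanding care is the bookkeeping of the superidentity — one must put the even element $z$ into the first slot, so that \emph{both} terms on the right-hand side involve the vanishing left action $[L_0,L_1]$ rather than the a priori nonzero right action $[L_1,L_0]$. (One can equally well use Lemma~\ref{lemm3.1}, but it is not needed here, and the conclusion in fact only uses that $L_0$ is a semisimple Lie algebra, not the specific case $L_0=\mathfrak{sl}_2$.)
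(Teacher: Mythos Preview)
Your proof is correct and is essentially the same as the paper's: both apply the Leibniz superidentity to a triple with the even element in the first slot, use $[L_0,L_1]=0$ to kill the right-hand side, and then conclude that $[L_1,L_1]$ lies in the right annihilator (equivalently, the center) of the semisimple Lie algebra $L_0$, hence vanishes. The only cosmetic difference is that the paper phrases the conclusion via $\mathcal{R}(L_0)=0$ rather than $\cent(L_0)=0$.
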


\begin{proof} By the condition of the Lemma we have that $[x,y]=0$  for any $x\in L_0$ and $y\in L_1.$
Considering the Leibniz superidentity for the elements $x\in L_0$ and $y, z \in L_1$ we have
$$[x,[y,z]]=[[x,y],z]+[[x,z],y] = 0,$$
which follows that $[y,z]\in \mathcal{R}(L_0).$  Since $\mathcal{R}(L_0)=0,$ we derive that $[y,z]=0$ for any $y, z \in L_1.$
\end{proof}

Let $L=\mathfrak{sl}_2\oplus L_1$ be a Leibniz superalgebra, i.e., even part is isomorphic to $\mathfrak{sl}_2.$ If $L_1$ is a simple $\mathfrak{sl}_2$-bimodule, then by Lemma \ref{LemBar} we have that either $[L_1, \mathfrak{sl}_2] =0$ or $[x,y]=-[y,x]$ for all $x\in \mathfrak{sl}_2,$ $y\in L_1.$

In case of $[L_1, \mathfrak{sl}_2] =0$, from Lemma \ref{lem lie}, we derive that $[L_1, L_1]=0$ and $L$ is isomorphic to the Leibniz algebra with the following muliplication \cite{O-R-T}:
\[\left\{\begin{array}{lll}
[e,h]=2e, & [h,f]=2f, & [e,f]=h,\\[1mm]
[h,e]=-2e, & [f,h]=-2f,& [f,e]=-h,\\[1mm]
[x_i,h]=(n-2i)x_i,&
[x_i,f]=x_{i+1},&
[x_i,e]=-i(n-i+1)x_{i-1},\ 0\leq i\leq n.
\end{array}\right.\]

Therefore, it is sufficient to consider the case when $[x,y]=-[y,x]$ for all $x\in \mathfrak{sl}_2,$ $y\in L_1.$ In the following Proposition we describe such Leibniz superalgebras in case of $dim L_1=2.$

\begin{prop} \label{lem2} Let $L=\mathfrak{sl}_2 \oplus L_1$ be a Leibniz superalgebra, such that $L_1$ is a simple bimodule. Let $ dim L_1 = 2 $ and $[x,y]=-[y,x]$ for all $x\in \mathfrak{sl}_2,$ $y\in L_1,$ then $L$ is isomorphic one of the following two Leibniz superalgebras:
\[S_1:\left\{\begin{array}{llllll}
[e,h]=2e, & [h,f]=2f, &[e,f]=h,\\[1mm]
[h,e]=-2e, & [f,h]=-2f,& [f,e]=-h,\\[1mm]
[x_0,h]=x_0,& [x_1,h]=-x_1,&[x_0,f]=x_{1},&[x_1,e]=-x_{0}, \\[1mm]
[h,x_0]=-x_0, & [h,x_1]=x_1,&[f,x_0]=-x_{1},& [e,x_1]=x_{0}.\\[1mm]
\end{array}\right.\]

\[S_2:\left\{\begin{array}{llll}
[e,h]=2e, & [h,f]=2f, & [e,f]=h,\\[1mm]
[h,e]=-2e, & [f,h]=-2f,& [f,e]=-h,\\[1mm]
[x_0,h]=x_0, & [x_1,h]=-x_1,&[x_0,f]=x_{1}, &[x_1,e]=-x_{0},\\[1mm]
[h,x_0]=-x_0, & [h,x_1]=x_1,&[f,x_0]=-x_{1},&[e,x_1]=x_{0},\\[1mm]
[x_0,x_0]=2e,& [x_1,x_1]=2f,&[x_0,x_1]=h,& [x_1,x_0]=h.\\[1mm]
\end{array}\right.\]
\end{prop}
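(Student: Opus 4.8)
The plan is to first put the brackets between $L_0=\mathfrak{sl}_2$ and $L_1$ into a fixed normal form, and then to pin down $[L_1,L_1]$ using the Leibniz superidentity. First I would note that since $L_1$ is a $2$-dimensional simple bimodule with $[x,y]=-[y,x]$, by Lemma~\ref{LemBar} it is antisymmetric, hence a $2$-dimensional $\mathfrak{sl}_2$-module under the right action; by Weyl's theorem this module is semisimple, and it cannot be a sum of one-dimensional modules (those would be trivial, forcing both actions to vanish and contradicting simplicity of the bimodule), so it is the standard irreducible $\mathfrak{sl}_2$-module. Fixing an $\mathfrak{sl}_2$-triple $\{e,h,f\}$ in $L_0$, I would then choose a basis $\{x_0,x_1\}$ of $L_1$ realizing table~\eqref{eq_N_1} with $n=1$ (together with the corresponding negatives for the left actions). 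This already reproduces all of the brackets between $L_0$ and $L_1$ that appear in both $S_1$ and $S_2$; the only remaining basis freedom is a simultaneous rescaling $x_i\mapsto t\,x_i$, which leaves these brackets unchanged.

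Next I would determine $[L_1,L_1]\subseteq L_0$. By Lemma~\ref{lemm3.1} the bilinear map $\varphi\colon L_1\times L_1\to L_0$, $\varphi(u,v)=[u,v]$, is symmetric, so it is encoded by the three elements $[x_0,x_0],[x_0,x_1],[x_1,x_1]$ of $\mathfrak{sl}_2$, that is, by nine scalars. I would then write the superidentity $[x,[y,z]]=[[x,y],z]-(-1)^{\alpha\beta}[[x,z],y]$ for $x\in\{e,h,f\}$ and $y,z\in\{x_0,x_1\}$ — in this case it reads $[x,[y,z]]=[[x,y],z]+[[x,z],y]$ and is an identity in $L_0$ — which says precisely that $\varphi$ is $\mathfrak{sl}_2$-equivariant (one checks that the left action makes $L_1$ a genuine irreducible $\mathfrak{sl}_2$-module). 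Since $L_1$ is the $2$-dimensional irreducible module, $S^2L_1$ is the adjoint module, so $\operatorname{Hom}_{\mathfrak{sl}_2}(S^2L_1,\mathfrak{sl}_2)$ is one-dimensional; solving the resulting linear system confirms this and yields $([x_0,x_0],[x_0,x_1],[x_1,x_1])=\lambda\,(2e,\,h,\,2f)$ for some $\lambda\in\mathbb{C}$. I would then check that the remaining instances of the superidentity — those with $x,y,z\in L_1$, which again reduce to linear identities since $[L_1,L_1]\subseteq L_0$ — hold for every value of $\lambda$; equivalently, that $S_1$ and $S_2$ really are Leibniz superalgebras.

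Finally I would normalize $\lambda$: the rescaling $x_i\mapsto t\,x_i$ multiplies $\lambda$ by $t^{2}$, so if $\lambda=0$ the superalgebra is $S_1$, while if $\lambda\neq0$ a choice of $t$ with $t^{2}=\lambda^{-1}$ (available over $\mathbb{C}$) brings it to $S_2$; and $S_1\not\cong S_2$ because an isomorphism of Leibniz superalgebras respects the grading and hence the subspace $[L_1,L_1]$, which is $0$ for $S_1$ and all of $\mathfrak{sl}_2$ for $S_2$. I expect the main obstacle to be the middle step: carrying out the equivariance computation carefully enough to be sure that the space of admissible maps $\varphi$ is exactly one-dimensional (and no larger), while tracking correctly the signs coming from the antisymmetry $[x,y]=-[y,x]$ and from the factor $(-1)^{\alpha\beta}$ in the superidentity. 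The outer steps are structural and then routine.
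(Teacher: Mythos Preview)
Your approach is correct and reaches the same conclusion as the paper, but the route is genuinely different. The paper proceeds by brute force: it writes $[x_i,x_j]=a_{i,j}e+b_{i,j}f+c_{i,j}h$ and then evaluates a handful of superidentities of the form $[a,[x_i,x_j]]$ with $a\in\{e,f,h\}$, solving the resulting linear system to find that all products collapse to a single free parameter $c_{0,1}$; the rescaling step is then identical to yours. You replace this computation by the representation-theoretic observation that the superidentity with $x\in L_0$ and $y,z\in L_1$ is precisely the statement that $\varphi\colon S^2L_1\to\mathfrak{sl}_2$ is $\mathfrak{sl}_2$-equivariant, and since $S^2L_1$ is the adjoint module, Schur's lemma gives a one-dimensional space of such maps. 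Your argument is cleaner and explains \emph{why} there is a single parameter, and it also makes transparent why the analogous argument in Proposition~\ref{prop3.4} must give $[L_1,L_1]=0$ (the adjoint does not occur in $S^2V_n$ for $n\geq 2$); the paper's approach, on the other hand, is entirely self-contained and avoids invoking the Clebsch--Gordan decomposition.

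One small point worth making explicit when you write this up: besides the instances with $x\in L_0,\ y,z\in L_1$ and with $x,y,z\in L_1$, there are also mixed instances with $x\in L_1$ and exactly one of $y,z$ in $L_0$. These do not impose new conditions---using $[a,m]=-[m,a]$ and the symmetry of $\varphi$ they rewrite into the same equivariance identity you already used---but you should say so, since as written your ``remaining instances'' clause only names the purely odd case.
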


\begin{proof}
From Lemma \ref{LemBar} and \eqref{eq_N_1} we have the following products:
\[\left\{\begin{array}{llll}
[[e,h]=2e, & [h,f]=2f, & [e,f]=h,\\[1mm]
[h,e]=-2e, & [f,h]=-2f,& [f,e]=-h,\\[1mm]
[x_0,h]=x_0, & [x_1,h]=-x_1,&[x_0,f]=x_{1}, &[x_1,e]=-x_{0},\\[1mm]
[h,x_0]=-x_0, & [h,x_1]=x_1,&[f,x_0]=-x_{1},&[e,x_1]=x_{0}.\\[1mm]
\end{array}\right.\]

Put
\[\begin{cases}
[x_0,x_0]=a_{0,0}e+b_{0,0}f+c_{0,0}h,\\[1mm]
[x_0,x_1]=a_{0,1}e+b_{0,1}f+c_{0,1}h,\\[1mm]
[x_1,x_1]=a_{1,1}e+b_{1,1}f+c_{1,1}h.\\[1mm]
\end{cases}\]

Consider following Leibniz superidentities:

\begin{itemize}
  \item $[e,[x_0,x_0]]=2[[e,x_0],x_0]=0.$

On the other hand: $[e,[x_0,x_0]]=[e,a_{0,0}e+b_{0,0}f+c_{0,0}h]=b_{0,0}h+2c_{0,0}e,$ which implies
\[b_{0,0}=c_{0,0}=0.\]

  \item $[e,[x_1,x_1]]=2[[e,x_1],x_1]=2[x_0,x_1]=2a_{0,1}e+2b_{0,1}f+2c_{0,1}h,$

On the other hand: $[e,[x_1,x_1]]=[e,a_{1,1}e+b_{1,1}f+c_{1,1}h]=b_{1,1}h+2c_{1,1}e,$ which derive
\[b_{0,1}=0, \ c_{1,1}=a_{0,1}, \ b_{1,1}=2c_{0,1}.\]

  \item $[h,[x_1,x_1]]=2[[h,x_1],x_1]=2[x_1,x_1]=2a_{1,1}e+2b_{1,1}f+2c_{1,1}h,$

On the other hand: $[h,[x_1,x_1]]=[h,a_{1,1}e+b_{1,1}f+c_{1,1}h]=-2a_{1,1}e+2b_{1,1}f,$ which implies
\[a_{1,1}=0, \ c_{1,1}=0\]

  \item $[f,[x_0,x_0]]=2[[f,x_0],x_0]=-2[x_1,x_0]=-2c_{0,1}h.$

On the other hand: $[f,[x_0,x_0]]=[f,a_{0,0}e]=-a_{0,0}h,$ which derive
\[2c_{0,1}=a_{0,0}.\]
\end{itemize}

The rest Leibniz superidentities give us the same restrictions. Thus, we have remaining multiplications:
\[\begin{cases}
[x_0,x_0]=2 c_{0,1}e,& [x_1,x_1]=2 c_{0,1}f,\\[1mm]
[x_0,x_1]=c_{0,1}h,& [x_1,x_0]=c_{0,1}h.\\[1mm]
\end{cases}\]

In case of $c_{0,1} = 0$ we have superalgebra $S_1$ and if $c_{0,1} \neq 0$, then taking the change
\[x_0'=\frac{1}{\sqrt{c_{0,1}}}x_0,\ x_1'=\frac{1}{\sqrt{c_{0,1}}}x_1,\]
we obtain the Leibniz superalgebra $S_2$.
\end{proof}

It should be Remark that in the superalgebra $S_1$ the multiplication $[L_1, L_1]$ is zero, thus $S_1$ is a Leibniz algebra.

In the following Proposition we investigate the case when $ dim L_1 \geq  3.$

\begin{prop} \label{prop3.4} Let $L=\mathfrak{sl}_2 \oplus L_1$ be a Leibniz superalgebra, such that $L_1$ is a simple bimodule.
Let $ dim L_1 \geq 3 $ and $[x,y]=-[y,x]$ for all $x\in \mathfrak{sl}_2,$ $y\in L_1,$ then $[L_1, L_1]=0$.
\end{prop}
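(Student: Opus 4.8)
The plan is to run the same bracket-by-bracket computation as in Proposition~\ref{lem2}, but now exploiting the fact that when $\dim L_1 = n+1 \geq 3$ the odd part carries the non-trivial weight decomposition \eqref{eq_N_1}, so weight counting forces most structure constants to vanish. Concretely, fix the basis $\{x_0,\dots,x_n\}$ of $L_1$ as in \eqref{eq_N_1}, where $x_i$ has $h$-weight $n-2i$ (equivalently $[x_i,h]=(n-2i)x_i$). For $0\le i\le j\le n$ write $[x_i,x_j]=a_{i,j}e+b_{i,j}f+c_{i,j}h$; by Lemma~\ref{lemm3.1} we have $[x_j,x_i]=[x_i,x_j]$, so it suffices to treat $i\le j$. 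The element $[x_i,x_j]$ lies in $L_0=\mathfrak{sl}_2$ and, testing the Leibniz superidentity against $h$, one finds $[[x_i,x_j],h] = [x_i,[x_j,h]] + [[x_i,h],x_j] = (n-2j)[x_i,x_j] + (n-2i)[x_i,x_j] = (2n-2i-2j)[x_i,x_j]$. On the other hand $[e,h]=2e$, $[f,h]=-2f$, $[h,h]=0$, so $[[x_i,x_j],h] = 2a_{i,j}e - 2b_{i,j}f$. Comparing weights: $a_{i,j}$ can be non-zero only if $2n-2i-2j = 2$, i.e. $i+j = n-1$; $b_{i,j}$ only if $i+j = n+1$; and $c_{i,j}$ only if $i+j = n$. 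Thus for each pair $(i,j)$ at most one structure constant survives, and it survives only on one of three ``anti-diagonals''.

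The next step is to kill even these. First I would use the superidentities involving $e$ and $f$ together with the raising/lowering action on the $x_i$ to obtain recursions among the surviving constants. For instance, from $[x_i,[x_j,f]] = [[x_i,x_j],f] - [[x_i,f],x_j]$ one gets $[x_i,x_{j+1}] = [[x_i,x_j],f] - [x_{i+1},x_j]$, and since $[e,f]=h$, $[f,f]=0$, $[h,f]=2f$ this reads, on the relevant anti-diagonal, as a two-term recursion relating $c_{i,j+1}$ (or $b_{i,j+1}$) to $b_{i,j}$ and $b_{i+1,j}$; similarly the $e$-superidentity gives a recursion in the other direction involving the coefficients $-i(n-i+1)$. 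The point of having $n\ge 2$ is that these anti-diagonals have length $\ge 3$, so the recursions have genuine ``interior'' relations and boundary conditions at both ends (coming from $[x_0,\cdot]$ and $[\cdot,x_n]$, where raising $x_0$ with $e$ or lowering $x_n$ with $f$ gives $0$), which over-determine the system and force all $a_{i,j}=b_{i,j}=c_{i,j}=0$. I would organize this as: (1) weight reduction to the three anti-diagonals, as above; (2) derive the $f$-recursion and the $e$-recursion along each anti-diagonal; (3) plug in the boundary vanishing and solve to get $0$. A clean alternative for step~(3): once the surviving constants are confined to three anti-diagonals, consider the bilinear map $\varphi\colon L_1\times L_1\to \mathfrak{sl}_2$ and observe that $[L_1,L_1]$ is a submodule of $\mathfrak{sl}_2$ under the $\mathfrak{sl}_2$-action; by Lemma~\ref{lemm3.1} it lies in $\mathcal R(L)$, and since $\mathcal R(L)\cap L_0 \subseteq \mathcal R(L_0) = 0$ for $L_0$ semisimple, one concludes $[L_1,L_1]=0$ directly — but one must first check $[L_1,L_1]\subseteq\mathcal R(L)$, which is exactly the content of the superidentity $[x,[y,z]]=[[x,y],z]+[[x,z],y]$ for $x\in L_0$, $y,z\in L_1$: it shows $[L_0,[L_1,L_1]]=0$, i.e. $[L_1,L_1]\subseteq\mathcal R(L_0)=0$.

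Actually that last observation collapses the whole argument: for $x\in\mathfrak{sl}_2$ and $y,z\in L_1$ the superidentity gives $[x,[y,z]] = [[x,y],z] - (-1)^{1\cdot1}[[x,z],y] = [[x,y],z]+[[x,z],y]$, and using the antisymmetry hypothesis $[x,y]=-[y,x]$ we rewrite $[[x,y],z] = -[[y,x],z]$ and $[[x,z],y]=-[[z,x],y]$; these are elements of $L_0$ bracketed with $L_1$-elements, landing back in $L_1$, and a further application of antisymmetry plus the module axioms \eqref{eq_module} lets one show the right-hand side vanishes, whence $[\mathfrak{sl}_2,[L_1,L_1]]=0$. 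Since $[L_1,L_1]\subseteq L_0=\mathfrak{sl}_2$ and $\mathfrak{sl}_2$ acts faithfully on itself by the adjoint representation (its center is zero), $[L_1,L_1]=0$. The subtlety — and the step I expect to be the real obstacle — is that this slick argument needs $[x,y]=-[y,x]$ to be used correctly on both slots and needs the module axioms to close the computation; if the direct identity-chasing does not immediately close, the fallback is the explicit weight-and-recursion computation of the first two paragraphs, where the main work is bookkeeping the three anti-diagonals and their boundary terms. I would present the short argument as the main proof and keep the computation in reserve; the genuinely essential hypothesis is $\dim L_1\ge 3$ versus $\dim L_1 = 2$, which enters because for $n=1$ (i.e. $\dim L_1 = 2$) the anti-diagonal $i+j=n=1$ consists of the single pair $(0,1)$ with no interior recursion, leaving the one-parameter family realized by $S_2$, whereas for $n\ge 2$ the recursions have no room to support a nonzero solution.
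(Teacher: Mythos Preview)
Your ``slick argument'' does not work, and your fallback is incomplete for exactly the reason the paper needs an extra step.

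On the slick argument: the superidentity $[x,[y,z]]=[[x,y],z]+[[x,z],y]$ for $x\in\mathfrak{sl}_2$, $y,z\in L_1$ does \emph{not} say $[\mathfrak{sl}_2,[L_1,L_1]]=0$; it says the bilinear map $\mu\colon L_1\times L_1\to\mathfrak{sl}_2$ is $\mathfrak{sl}_2$-equivariant (left action on each factor, adjoint action on the target). Your hedged sentence ``a further application of antisymmetry plus the module axioms lets one show the right-hand side vanishes'' is simply false: in $S_2$ one has $[h,[x_0,x_0]]=[h,2e]=-4e\neq0$, so the identity-chasing cannot close without invoking $\dim L_1\ge 3$ somewhere---and your slick argument never does. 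Also, Lemma~\ref{lemm3.1} gives $[x,y]-[y,x]=0\in\mathcal R(L)$, not $[x,y]\in\mathcal R(L)$.

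On the fallback: your weight reduction to the three anti-diagonals is fine and matches the paper's \eqref{eq_3.1}. The $e$- and $f$-recursions you sketch are exactly \eqref{eq_3.2}--\eqref{eq_3.3}. But these three systems together are \emph{precisely} the condition that $\mu\colon\mathrm{Sym}^2(L_1)\to\mathfrak{sl}_2$ be an $\mathfrak{sl}_2$-module map, and by Clebsch--Gordan $V_2$ occurs in $\mathrm{Sym}^2(V_n)$ if and only if $n$ is odd. So for even $n$ your recursions (plus symmetry) do force everything to zero, but for odd $n\ge 3$ they admit a one-parameter family of solutions---e.g.\ for $n=3$ the assignment $a_{0,2}=-a_{1,1}=t$, $c_{0,3}=\tfrac32 t$, $c_{1,2}=-\tfrac12 t$, $b_{1,3}=3t$, $b_{2,2}=-4t$ satisfies all of \eqref{eq_3.1}--\eqref{eq_3.3}. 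No amount of ``boundary conditions at $x_0$ and $x_n$'' will kill this, because those boundaries are already built into the module-map condition.

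What is missing is a superidentity with three odd entries. The paper uses, for odd $n$, the identity $[x_i,[x_j,x_k]]=[[x_i,x_j],x_k]+[[x_i,x_k],x_j]$ (concretely $[x_1,[x_1,x_1]]=2[[x_1,x_1],x_1]$ for $n=3$, and $[x_{n-2},[x_1,x_1]]=2[[x_{n-2},x_1],x_1]$ for $n\ge 5$); this lands in $L_1$, compares the left and right $\mathfrak{sl}_2$-actions on $L_1$, and forces the remaining parameter to zero. Once $a_{i,j}=0$, your recursions do the rest. You should add exactly this step for odd $n$.
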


\begin{proof}
By the condition of Proposition we have that there exist a basis $\{e,f,h,x_0,_1,\dots,x_n\}$ of $L$ such that
\[\left\{\begin{array}{lll}
[e,h]=2e, & [h,f]=2f, & [e,f]=h,\\[1mm]
[h,e]=-2e, & [f,h]=-2f,& [f,e]=-h,\\[1mm]
[x_k,h]=(n-2k)x_k,&[h,x_k]=(2k-n)x_k,\\[1mm]
[x_k,f]=x_{k+1},&[f,x_k]=-x_{k+1},\\[1mm]
[x_k,e]=-k(n+1-k)x_{k-1},&[e,x_k]=k(n+1-k)x_{k-1}.\\[1mm]
\end{array}\right.\]

Put
\[[x_i,x_j]=a_{i,j}e+b_{i,j}f+c_{i,j}h, \quad 0\leq i\leq j \leq n.\]

Now we consider Leibniz superidentity for the elements $[h,[x_i,x_j]],$ $[f,[x_i,x_j]]$ and $[e,[x_i,x_j]].$
$$[h,[x_i,x_j]]=[[h,x_i],x_j]+[[h,x_j],x_i]=(2i-n)[x_i,x_j]+(2j-n)[x_j,x_i]=$$
$$=2(i+j-n)(a_{i,j}e+b_{i,j}f+c_{i,j}h)=2(i+j-n)a_{i,j}e+2(i+j-n)b_{i,j}f+2(i+j-n)c_{i,j}h,$$
where $0 \leq i \leq j \leq n.$

$$[f,[x_i,x_j]]=[[f,x_i],x_j]+[[f,x_j],x_i]=-[x_{i+1},x_j]-[x_{j+1},x_i]=$$
$$=-(a_{i+1,j}+a_{i,j+1})e-(b_{i+1,j}+b_{i,j+1})f-(c_{i+1,j}+c_{i,j+1})h,$$
where $0 \leq i \leq j \leq n-1.$

$$[e,[x_i,x_j]]=[[e,x_i],x_j]+[[e,x_j],x_i]=i(n+1-i)[x_{i-1},x_j]+j(n+1-j)[x_{j-1},x_i]=$$
$$=(i(n+1-i)a_{i-1,j}+j(n+1-j)a_{i,j-1})e+(i(n+1-i)b_{i-1,j}+j(n+1-j)b_{i,j-1})f+$$
$$+(i(n+1-i)c_{i-1,j}+j(n+1-j)c_{i,j-1})h,$$
where $1 \leq i \leq j \leq n.$

On the other hand,
$$[h,[x_i,x_j]]=[h,a_{i,j}e+b_{i,j}f+c_{i,j}h]=-2a_{i,j}e+2b_{i,j}f,$$
$$[f,[x_i,x_j]]=[f,a_{i,j}e+b_{i,j}f+c_{i,j}h]=-a_{i,j}h-2c_{i,j}f,$$
$$[e,[x_i,x_j]]=[e,a_{i,j}e+b_{i,j}f+c_{i,j}h]=b_{i,j}h+2c_{i,j}e,$$

Comparing the coefficients at the basis elements, we obtain following restrictions:
\begin{equation}\label{eq_3.1}\begin{cases}
(i+j+1-n)a_{i,j}=0,\\[1mm]
(i+j-1-n)b_{i,j}=0,\\[1mm]
(i+j-n)c_{i,j}=0.\\[1mm]
\end{cases}\end{equation}

\begin{equation} \label{eq_3.2} \begin{cases}
a_{i+1,j}+a_{j+1,i}=0,\\[1mm]
b_{i+1,j}+b_{j+1,i}=2c_{i,j},\\[1mm]
c_{i+1,j}+c_{j+1,i}=a_{i,j}.\\[1mm]
\end{cases}\end{equation}

\begin{equation} \label{eq_3.3}\begin{cases}
i(n+1-i)a_{i-1,j}+j(n+1-j)a_{i,j-1}=2c_{i,j},\\[1mm]
i(n+1-i)b_{i-1,j}+j(n+1-j)b_{i,j-1}=0,\\[1mm]
i(n+1-i)c_{i-1,j}+j(n+1-j)c_{i,j-1}=b_{i,j}.\\[1mm]
\end{cases}\end{equation}

It is obvious that from \eqref{eq_3.1} we have:
\begin{equation} \label{eq_3.4}\begin{cases}
a_{i,j}=0, & i+j\neq n-1,\\[1mm]
b_{i,j}=0, & i+j\neq n+1,\\[1mm]
c_{i,j}=0, & i+j\neq n.\\[1mm]
\end{cases}\end{equation}

From the first equality of \eqref{eq_3.2} we derive:
\begin{equation}\label{eq_3.5} a_{i,n-i-1}=(-1)^ia_{0,n-1},\quad 0 \leq i\leq n-1.\end{equation}

If $n$ is even, then we have $a_{n-1,0} = -a_{0,n-1}.$ On the other hand, $a_{n-1,0} = a_{0,n-1}.$ Thus, we get
$a_{0,n-1}=0,$ which implies $a_{i,j}=0,\ 0\leq i\leq j\leq n.$

If $n$ is odd, then in case of $n=3$ from the Leibniz superidentity $[x_1,[x_1,x_1]]-2[[x_1,x_1],x_1]=0$ we get $\ a_{1,1}=0.$
If $n\geq 4,$ then considering Leibniz superidentity $[x_{n-2},[x_1,x_1]]-2[[x_{n-2},x_1],x_1]=0,$ we obtain $\ a_{n-2,0}=0.$ According to the equality \eqref{eq_3.5} it follows $a_{i,j}=0, \ 0\leq i\leq j\leq n.$

Since $a_{i,j}=0,$ then from the equality \eqref{eq_3.3} we have:
\[b_{i,j}=0, \quad c_{i,j}=0, \quad 0\leq i\leq j\leq n.\]

Therefore, we get $ [L_1, L_1] = 0.$
\end{proof}

Now we consider the Leibniz superalgebra $L=\mathfrak{sl}_2\oplus L_1$, such that $L_1$ is a split module into direct sum of the two simple $\mathfrak{sl}_2$-modules.
Then by the Theorem \ref{thmkurb} we obtain that $L_1$ as a $\mathfrak{sl}_2$-bimodule isomorphic to $M_1$ or $M_2.$

\begin{prop}\label{lem M1} Let $L=\mathfrak{sl}_2\oplus L_1$ ba a Leibniz superalgebra such that $L_1$ is a bimodule isomorphic to $M_1$. Then
$[L_1,L_1]=0.$
\end{prop}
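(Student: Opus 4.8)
The plan is to mimic the computation carried out in Proposition \ref{prop3.4}, but now with the module $M_1$ in place of the irreducible module $N_1$. Write $L_1 = \langle x_0,\dots,x_n,y_0,\dots,y_{n-2}\rangle$ with the left and right $\mathfrak{sl}_2$-actions given in the table for $M_1$, and set
\[
[x_i,x_j]=a_{i,j}e+b_{i,j}f+c_{i,j}h,\qquad
[x_i,y_j]=p_{i,j}e+q_{i,j}f+r_{i,j}h,
\]
\[
[y_i,x_j]=\tilde p_{i,j}e+\tilde q_{i,j}f+\tilde r_{i,j}h,\qquad
[y_i,y_j]=u_{i,j}e+v_{i,j}f+w_{i,j}h.
\]
By Lemma \ref{lemm3.1} we have $[x_i,y_j]=[y_j,x_i]$, so the mixed brackets are symmetric and it suffices to track $[x_i,y_j]$, $[x_i,x_j]$ (with $i\le j$) and $[y_i,y_j]$ (with $i\le j$).

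First I would exploit the weight grading. Applying the Leibniz superidentity with first argument $h$ to each of these brackets, and using that $[h,x_i]=-(n-2i)x_i-2iy_{i-1}$, $[h,y_j]=0$, $[x_i,h]=(n-2i)x_i$, $[y_j,h]=(n-2-2j)y_j$, forces each structure constant to vanish unless its two indices sum to a fixed value dictated by the weight of $e$, $f$ or $h$: exactly as in \eqref{eq_3.4}, $a_{i,j}=0$ unless $i+j=n-1$, $b_{i,j}=0$ unless $i+j=n+1$, $c_{i,j}=0$ unless $i+j=n$, and similarly $u_{i,j}=0$ unless $i+j=n-3$, $v_{i,j}=0$ unless $i+j=n+1$ (note the shift by two in the $y$-weights), etc. This collapses the unknowns to a handful of one-parameter families along anti-diagonals.

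Next I would run the $e$- and $f$-superidentities. The key new feature compared to Proposition \ref{prop3.4} is that the right action of $f$ (resp.\ $e$) on $x_i$ now has a $y$-component: $[f,x_i]=-x_{i+1}+y_i$ and $[e,x_i]=i(n-i+1)x_{i-1}+i(i-1)y_{i-2}$. So the recursions relating $a_{i+1,j}$, $b_{i,j}$, $c_{i,j}$ now couple the $x$-$x$ brackets to the $x$-$y$ and $y$-$y$ brackets. I would first nail down the $a$'s: the analogue of \eqref{eq_3.5} still gives $a_{i,n-1-i}=(-1)^i a_{0,n-1}$, and then the same parity argument (if $n$ even, symmetry of $[x_{n-1},x_0]$ kills it; if $n$ odd, a single superidentity of the form $[x_{n-2},[x_1,x_1]]=2[[x_{n-2},x_1],x_1]$ kills it, with the low-dimensional case $n=3$ handled separately) yields $a_{i,j}=0$ for all $i,j$. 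Feeding $a_{i,j}=0$ back into the $e$-identity's first component then forces the $c$'s, and the $b$'s, to vanish — just as at the end of Proposition \ref{prop3.4}. In the present setting one must also check that the mixed brackets $[x_i,y_j]$ and the pure $[y_i,y_j]$ brackets vanish; since $y_j$ generates (under the right $\mathfrak{sl}_2$-action) only $y$'s, the $y$-$y$ brackets behave like an irreducible module of dimension $n-1$ and the argument of Proposition \ref{prop3.4} applies verbatim to kill them, while the $x$-$y$ brackets are pinned down by the superidentities with first entry $e$ or $f$ acting through the nontrivial $y$-components above.

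The main obstacle I anticipate is bookkeeping: there are now four families of structure constants and the $e$/$f$-superidentities mix them, so one has to be careful about the order in which the families are eliminated (I expect the correct order is: $a$'s first, then $u,v,w$ by the irreducible-submodule argument, then the mixed $p,q,r$, then finally $b,c$). A secondary subtlety is the handful of small-$n$ cases ($n=2$, i.e.\ $\dim L_1 = 3$ with $y$-part one-dimensional, and $n=3$), where some of the index ranges in the recursions degenerate and must be treated by hand, exactly as the $n=3$ case is split off in Proposition \ref{prop3.4}. Once every structure constant is shown to be zero, $[L_1,L_1]=0$ follows.
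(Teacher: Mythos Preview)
Your plan will work, but you are missing the one observation that eliminates almost all of the bookkeeping you anticipate. In $M_1$ one has $[y_0,h]+[h,y_0]=(n-2)y_0$ and $[y_j,f]+[f,y_j]=y_{j+1}$ (and also $[x_i,f]+[f,x_i]=y_i$, which covers $n=2$), so every $y_j$ lies in the right annihilator $\mathcal{R}(L)$. This gives $[x_i,y_j]=[y_i,y_j]=0$ at once, and by Lemma~\ref{lemm3.1} also $[y_j,x_i]=0$. Three of your four families therefore vanish for free and only $[x_i,x_j]$ remains; moreover the extra $y$-terms in $[h,x_i]$, $[f,x_i]$, $[e,x_i]$ now land in $\mathcal{R}(L)$ and contribute nothing to any superidentity, so the computation for $[x_i,x_j]$ is \emph{literally} that of Proposition~\ref{prop3.4}. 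This is exactly the paper's proof.

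Two cautions about your route as written. First, the ``superidentity with first argument $h$'' does not by itself yield the clean anti-diagonal support \eqref{eq_3.4} for $a_{i,j},b_{i,j},c_{i,j}$: since $[h,x_i]=-(n-2i)x_i-2iy_{i-1}$, the identity $[h,[x_i,x_j]]=[[h,x_i],x_j]+[[h,x_j],x_i]$ couples $a_{i,j}$ to the mixed brackets $[y_{i-1},x_j]$. To recover \eqref{eq_3.4} directly you should instead use $h$ on the right, $[[x_i,x_j],h]=[x_i,[x_j,h]]+[[x_i,h],x_j]$, where the action is genuinely diagonal. Second, and for the same reason, your proposed elimination order ``$a$'s first, then $u,v,w$, then mixed'' is inverted: without the $\mathcal{R}(L)$ shortcut one must kill the $y$--$y$ brackets first (immediate, since $[\mathfrak{sl}_2,y_j]=0$), then the mixed brackets, and only then does the $x$--$x$ system uncouple into \eqref{eq_3.6}--\eqref{eq_3.8}.
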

\begin{proof} By the condition of the Proposition we have that there exist a basis $\{e,f,h, x_0, x_1,\dots, x_n, y_0, y_1,\dots, y_{n-2}\}$
of the superalgebra $L$ such that the following products are hold:
\[\left\{\begin{array}{lll}
[e,h]=2e, & [h,f]=2f, &[e,f]=h,\\[1mm]
[h,e]=-2e, & [f,h]=-2f,& [f,e]=-h,\\[1mm]
[x_i,h]=(n-2i)x_i,& [h,x_i]=-(n-2i)x_i-2iy_{i-1},\\[1mm]
[x_i,f]=x_{i+1},& [f,x_i]=-x_{i+1}+y_{i},\\[1mm]
[x_i,e]=-i(n-i+1)x_{i-1},& [e,x_i]=i(n-i+1)x_{i-1}+i(i-1)y_{i-2},\\[1mm]
[y_j,h]=(n-2-2j)y_j,& [h,y_j]=0,\\[1mm]
[y_j,f]=y_{j+1},& [f,y_j]=0,\\[1mm]
[y_j,e]=-j(n-j-1)y_{j-1},& [e,y_j]=0.\\[1mm]
\end{array}\right.\]

Since $$[y_0,h]+[h,y_0]=(n-2)y_{0}, \qquad [y_j,f]+[f,y_j]=y_{j+1}, \quad 0 \leq j \leq n-3,$$ we have that $y_j \in \mathcal{R}(L)$ for $0 \leq j \leq n-2.$ Thus, we have $[y_i, y_j] = [x_i, y_j]=0$ for any values $i$ and $j.$

Put
\[[x_i,x_j]=a_{i,j}e+b_{i,j}f+c_{i,j}h, \quad 0\leq i\leq j \leq n.\]

Considering Leibniz superidentities for the for the elements $[h,[x_i,x_j]],$ $[f,[x_i,x_j]]$ and $[e,[x_i,x_j]],$ we obtain following restrictions:
\begin{equation}\label{eq_3.6}\begin{cases}
(i+j+1-n)a_{i,j}=0,\\[1mm]
(i+j-1-n)b_{i,j}=0,\\[1mm]
(i+j-n)c_{i,j}=0.\\[1mm]
\end{cases}\end{equation}

 \begin{equation}\label{eq_3.7}\begin{cases}
a_{i+1,j}+a_{j+1,i}=0,\\[1mm]
b_{i+1,j}+b_{j+1,i}=2c_{i,j},\\[1mm]
c_{i+1,j}+c_{j+1,i}=a_{i,j}.\\[1mm]
\end{cases} \end{equation}

\begin{equation}\label{eq_3.8}\begin{cases}
i(n+1-i)a_{i-1,j}+j(n+1-j)a_{i,j-1}=2c_{i,j},\\[1mm]
i(n+1-i)b_{i-1,j}+j(n+1-j)b_{i,j-1}=0,\\[1mm]
i(n+1-i)c_{i-1,j}+j(n+1-j)c_{i,j-1}=b_{i,j}.\\[1mm]
\end{cases}\end{equation}

Analogously, to the proof of Proposition \ref{prop3.4} from \eqref{eq_3.6} -- \eqref{eq_3.8} we conclude that $a_{i,j}=b_{i,j}=c_{i,j}=0.$
Therefore, $[x_i, x_j] = 0$ for any $0 \leq i \leq j \leq n,$ which implies $[L_1, L_1]=0.$

\end{proof}

\begin{prop} Let $L=\mathfrak{sl}_2\oplus L_1$ ba a Leibniz superalgebra such that $L_1$ is a bimodule isomorphic to $M_2$. Then
$[L_1,L_1]=0.$
\end{prop}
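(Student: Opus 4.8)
The plan is to exploit the special feature of the bimodule $M_2$: the subspace $X=\spann\{x_0,\dots,x_n\}$ is a sub-bimodule on which the left action of $\mathfrak{sl}_2$ is trivial, while as a right $\mathfrak{sl}_2$-module it is the irreducible $(n+1)$-dimensional module (cf.~\eqref{eq_N_2}). I will use this to push all the $x_i$ into the right annihilator, and then kill the brackets $[y_i,y_j]$ by faithfulness of that module.

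First I would record the multiplication of $L$ dictated by $M_2$, as in the previous propositions. Since the elements $[a,b]+(-1)^{\alpha\beta}[b,a]$ lie in $\mathcal{R}(L)$, taking $a=x_i\in L_1$ and $b=f$ gives $x_{i+1}=[x_i,f]+[f,x_i]\in\mathcal{R}(L)$ for $0\le i\le n-1$, and taking $a=x_1$, $b=e$ gives $-nx_0=[x_1,e]+[e,x_1]\in\mathcal{R}(L)$. Hence $X\subseteq\mathcal{R}(L)$, so $[x_j,x_i]=0$ and $[y_j,x_i]=0$ for all $i,j$, and by Lemma~\ref{lemm3.1} also $[x_i,y_j]=[y_j,x_i]=0$. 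Thus the only products inside $L_1$ still to be treated are the $[y_i,y_j]$, which lie in $L_0=\mathfrak{sl}_2$. (The $y_j$ themselves need not lie in $\mathcal{R}(L)$: for two odd elements the combination that lands in $\mathcal{R}(L)$ is $[y_i,y_j]-[y_j,y_i]$, which already vanishes by Lemma~\ref{lemm3.1}, so a separate argument is needed here.)

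Next I would put $x_k$ into the first slot of the Leibniz superidentity:
\[
[x_k,[y_i,y_j]]=[[x_k,y_i],y_j]+[[x_k,y_j],y_i].
\]
By the previous step $[x_k,y_i]=[x_k,y_j]=0$, so $[x_k,[y_i,y_j]]=0$ for every $k$. But $X$ is a faithful $\mathfrak{sl}_2$-module under the right action (its kernel is an ideal of the simple Lie algebra $\mathfrak{sl}_2$, and it is proper since $[x_0,h]=nx_0\neq0$), and $[y_i,y_j]\in\mathfrak{sl}_2$ annihilates a basis of $X$; hence $[y_i,y_j]=0$, and therefore $[L_1,L_1]=0$.

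The only point I expect to require a little care is the faithfulness claim, which is immediate once one recalls that a nontrivial representation of a simple Lie algebra is faithful. If one prefers to stay strictly parallel to Propositions~\ref{prop3.4} and~\ref{lem M1}, one can instead set $[y_i,y_j]=a_{i,j}e+b_{i,j}f+c_{i,j}h$ and evaluate the superidentities $[h,[y_i,y_j]]$, $[f,[y_i,y_j]]$, $[e,[y_i,y_j]]$: the left actions $[h,y_i]$, $[f,y_i]$, $[e,y_i]$ in $M_2$ differ from their symmetric counterparts only by terms in $X$, which drop out because $[x_k,y_j]=0$, so one recovers exactly the systems \eqref{eq_3.6}--\eqref{eq_3.8} with $n$ replaced by $n-2$ (reflecting the right $h$-weight $n-2-2j$ of $y_j$), after which the argument of Proposition~\ref{prop3.4} applies verbatim, the smallest cases being handled by a superidentity of the form $[y_p,[y_1,y_1]]-2[[y_p,y_1],y_1]=0$.
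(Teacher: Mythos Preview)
Your argument is correct. The first half---showing that each $x_i$ lies in $\mathcal{R}(L)$ via anticommutators with $f$ and $e$ (the paper uses $h$ in place of $e$, but this is immaterial), and then invoking Lemma~\ref{lemm3.1} to get $[x_i,y_j]=[y_j,x_i]=0$---matches the paper exactly.

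The second half is where you diverge. The paper proceeds just as in your alternative paragraph: it writes $[y_i,y_j]=a_{i,j}e+b_{i,j}f+c_{i,j}h$, expands the superidentities for $[h,[y_i,y_j]]$, $[f,[y_i,y_j]]$, $[e,[y_i,y_j]]$, observes that the $x$-terms appearing in the left actions on $y_i$ drop out because $[x_k,y_j]=0$, and lands on the system \eqref{eq_3.1}--\eqref{eq_3.3} with $n$ shifted to $n-2$; it then appeals to the casework of Proposition~\ref{prop3.4}. Your primary route---plugging $x_k$ into the first slot of the superidentity and using that the right $\mathfrak{sl}_2$-action on $X$ is faithful---is shorter and sidesteps the parity/low-dimension case analysis entirely (in particular it covers $\dim Y\in\{1,2\}$ without a separate check). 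The paper's route has the virtue of being a mechanical repetition of Proposition~\ref{prop3.4}, but your faithfulness trick is the cleaner way to finish.
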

\begin{proof}

By the condition of the Proposition we have that there exist a basis $\{e,f,h, x_0, x_1,\dots, x_n, y_0, y_1,\dots, y_{n-2}\}$
of the superalgebra $L$ such that the following products are hold:
\[M_2:\left\{\begin{array}{lll}
[e,h]=2e, & [h,f]=2f, &[e,f]=h,\\[1mm]
[h,e]=-2e, & [f,h]=-2f,& [f,e]=-h,\\[1mm]
[x_i,h]=(n-2i)x_i,& [h,x_i]=0,\\[1mm]
[x_i,f]=x_{i+1},& [f,x_i]=0,\\[1mm]
[x_i,e]=-i(n-i+1)x_{i-1},& [e,x_i]=0,\\[1mm]
[y_j,h]=(n-2-2j)y_j,& [h,y_j]=2(n-j-1)x_{j+1}-(n-2j-2)y_{j},\\[1mm]
[y_j,f]=y_{j+1},& [f,y_j]=x_{j+2}-y_{j+1},\\[1mm]
[y_j,e]=-j(n-j-1)y_{j-1},& [e,y_j]=(n-j-1)((n-j)x_{j}+jy_{j-1}).\\[1mm]
\end{array}\right.\]

Since $$[x_0,h]+[h,x_0]=n x_{0}, \qquad [x_i,f]+[f,x_i]=x_{i+1}, \quad 0 \leq i \leq n-1,$$
we have that $x_i \in \mathcal{R}(L)$ for $0 \leq i \leq n.$ Thus, we have $[x_j, x_i] = [y_j, x_i]=0$ for any values $i$ and $j.$

Put
\[[y_i,y_j]=a_{i,j}e+b_{i,j}f+c_{i,j}h, \quad 0\leq i\leq j \leq n-2.\]

Analogously,  to the proof of Proposition \ref{prop3.4} considering Leibniz superidentities for the elements $[h,[y_i,y_j]],$ $[f,[y_i,y_j]]$ and $[e,[y_i,y_j]],$
we obtain following restrictions:

\begin{equation}\label{eq_3.9}\begin{cases}
(i+j+3-n)a_{i,j}=0,\\[1mm]
(i+j+1-n)b_{i,j}=0,\\[1mm]
(i+j+2-n)c_{i,j}=0.\\[1mm]
\end{cases}\end{equation}

\begin{equation}\label{eq_3.10}\begin{cases}
a_{i+1,j}+a_{j+1,i}=0,\\[1mm]
b_{i+1,j}+b_{j+1,i}=2c_{i,j},\\[1mm]
c_{i+1,j}+c_{j+1,i}=a_{i,j}.\\[1mm]
\end{cases}\end{equation}

\begin{equation}\label{eq_3.11}\begin{cases}
i(n-1-i)a_{i-1,j}+j(n-1-j)a_{i,j-1}=2c_{i,j},\\[1mm]
i(n-1-i)b_{i-1,j}+j(n-1-j)b_{i,j-1}=0,\\[1mm]
i(n-1-i)c_{i-1,j}+j(n-1-j)c_{i,j-1}=b_{i,j}.\\[1mm]
\end{cases}\end{equation}

From the equations \eqref{eq_3.9} -- \eqref{eq_3.11}, similarly of the proof of
Proposition \ref{prop3.4} we conclude that $ a_{i,j} = 0, \ b_{i,j} = 0, \ c_{i,j} = 0. $
%
Therefore, we obtain $[L_1,L_1]=0.$
\end{proof}

In the next two Propositions we consider Leibniz superalgebra $L=\mathfrak{sl}_2\oplus L_1$, such that $L_1$ is a split $\mathfrak{sl}_2-$modules as $V_1\oplus V_2\oplus\dots\oplus V_k$. By Theorem \ref{thmkurb2} we obtain that $L_1$ is a bimodule isomorphic to $M_3$ or $M_4$.

\begin{prop} Let $L=\mathfrak{sl}_2\oplus L_1$ ba a Leibniz superalgebra such that $L_1$ is a bimodule isomorphic to $M_3$. Then
$[L_1,L_1]=0.$
\end{prop}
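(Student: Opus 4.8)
The plan is to follow the pattern of Propositions~\ref{prop3.4} and \ref{lem M1}: first cut the problem down to a small set of unknown brackets by pushing most basis vectors into the right annihilator $\mathcal{R}(L)$, and then determine the survivors with the Leibniz superidentity applied to $e$, $f$, $h$. So in \textbf{Step 1} I would write out the bracket table of $M_3$ from Theorem~\ref{thmkurb2} and observe that the summands $V_m$ with $m$ odd (equivalently, the families $v_i^{2p-1}$ and $v_i^{2p+1}$) carry trivial left $\mathfrak{sl}_2$-action. Using that elements of the form $[a,b]+(-1)^{\alpha\beta}[b,a]$ lie in $\mathcal{R}(L)$, the identities $[v_i^{2p-1},h]+[h,v_i^{2p-1}]=(n-4p+4-2i)v_i^{2p-1}$ and $[v_i^{2p-1},f]+[f,v_i^{2p-1}]=v_{i+1}^{2p-1}$ force every $v_i^{2p-1}$ into $\mathcal{R}(L)$ (the single degenerate weight inside each $V_m$ being handled by applying $[-,f]$ or $[-,e]$ to a vector of nonzero weight). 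Hence $[v,v_i^{2p-1}]=0$ for all $v\in L$, and by Lemma~\ref{lemm3.1} also $[v_i^{2p-1},v]=0$. Consequently the only brackets that can be nonzero are $[v_i^{2p},v_j^{2q}]$ with \emph{both} superscripts even, and these land in $L_0=\mathfrak{sl}_2$.

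In \textbf{Step 2} I would set $[v_i^{2p},v_j^{2q}]=a\,e+b\,f+c\,h$ and apply $[h,-]$, $[f,-]$, $[e,-]$ through the Leibniz superidentity, comparing with the direct values $[h,ae+bf+ch]=-2ae+2bf$, $[f,ae+bf+ch]=-ah-2cf$, $[e,ae+bf+ch]=bh+2ce$. This yields exactly the three linear systems of the shape \eqref{eq_3.6}--\eqref{eq_3.8}, with $n-4p+2-2i$ playing the role of the weight and the extra parameter $q$ present on the second factor: the coefficient $a$ can be nonzero only when the two $h$-weights add to $2$, $b$ only when they add to $-2$, $c$ only when they add to $0$; moreover the first component obeys the skew relation $a_{(i+1,2p),(j,2q)}+a_{(j+1,2q),(i,2p)}=0$ and $b,c$ obey the raising/lowering recursions.

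Then \textbf{Step 3} is to run the argument of Proposition~\ref{prop3.4} for each pair $(2p,2q)$ of even summands: iterating the skew relation along an $f$-string expresses all the relevant $a$-coefficients up to sign in terms of one of them; the symmetry $[v_i^{2p},v_j^{2q}]=[v_j^{2q},v_i^{2p}]$ from Lemma~\ref{lemm3.1} kills that coefficient when the string has even length, and the finitely many short-string cases are eliminated by testing one superidentity of the form $[w,[v,v]]=2[[w,v],v]$, exactly as the cases $n=3$ and $n\geq 4$ were separated there. Once all $a$-coefficients vanish, the \eqref{eq_3.8}-type recursions give $b=c=0$. Combining with Step~1, $[L_1,L_1]=0$.

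The main obstacle is purely the bookkeeping in Step~3: one must decide for which tuples $(i,j,p,q,n)$ the weight conditions of Step~2 are simultaneously solvable (so that a coefficient is even permitted to be nonzero), and then verify the low-dimensional boundary cases in which the generic parity argument degenerates -- the analogue of the $n=3$ / $n\geq 4$ dichotomy, here depending in addition on the relative dimensions of $V_{2p}$ and $V_{2q}$. The underlying mechanism, however, is identical to Proposition~\ref{prop3.4}, so no genuinely new idea is needed beyond the annihilator reduction of Step~1.
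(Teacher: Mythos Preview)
Your Step~1 coincides with the paper's: the odd-superscript vectors $v_i^{2p\pm 1}$ are pushed into $\mathcal{R}(L)$ via the displayed sums, and Lemma~\ref{lemm3.1} then annihilates every bracket with an odd-superscript factor. Where you diverge is in Steps~2--3. Rather than probe $[v_i^{2p},v_j^{2q}]=ae+bf+ch$ with $h,f,e$ and rerun the weight/skew/parity machinery of Proposition~\ref{prop3.4}, the paper observes that the very vectors just placed in $\mathcal{R}(L)$ do the work more cheaply: for $w\in\{v_0^{2p-1},v_1^{2p-1}\}$ the superidentity gives
\[
[w,[v_i^{2p},v_j^{2q}]]=[[w,v_i^{2p}],v_j^{2q}]+[[w,v_j^{2q}],v_i^{2p}]=0,
\]
while the left side equals $[w,ae+bf+ch]$ computed from the \emph{right} action on $V_{2p-1}$. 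Taking $w=v_0^{2p-1}$ yields $b\,v_1^{2p-1}+(n-4p+4)c\,v_0^{2p-1}=0$; taking $w=v_1^{2p-1}$ yields $-(n-4p+4)a\,v_0^{2p-1}+b\,v_2^{2p-1}+(n-4p+2)c\,v_1^{2p-1}=0$; since $n-4p+4\ge 2$ whenever $V_{2p}$ is present, one reads off $a=b=c=0$ immediately. Your route is not wrong, but it incurs precisely the bookkeeping you flag as the obstacle: with the extra summand indices $p,q$ the analogue of the $n$ even/odd dichotomy in Proposition~\ref{prop3.4} splinters into subcases (diagonal $p=q$ versus $p\neq q$, and boundary effects from $\dim V_{2p}\neq\dim V_{2q}$), each needing its own witness identity. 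The paper's trick sidesteps all of that in two lines.
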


\begin{proof}
By the condition of the Proposition we have that there exist a basis $\{e,f,h,v_0^j,v_1^j,\dots,v_{n-2k+2}^j\}, \ 1\leq j \leq k$ of the superalgebra $L$ such that the following products are holds:

\[\left\{\begin{array}{lll}
[e,h]=2e,&[h,f]=2f,\ [e,f]=h,\\[1mm]
[h,e]=-2e,&[f,h]=-2f,\ [f,e]=-h,\\[1mm]
[v_i^{2p-1},h]=(n-4p+4-2i)v_i^{2p-1},& [h,v_i^{2p-1}]=0,\\[1mm]
[v_i^{2p-1},f]=v_{i+1}^{2p-1},& [f,v_i^{2p-1}]=0,\\[1mm]
[v_i^{2p-1},e]=-i(n-4p+5-i)v_{i-1}^{2p-1},& [e,v_i^{2p-1}]=0,\\[1mm]

[v_i^{2p},h]=(n-4p+2-2i)v_i^{2p},& [h,v_i^{2p}]=2(n-2p-i+3)v_{i+1}^{2p-1}-(n-2p-2i+2)v_{i+1}^{2p}-2iv_{i-1}^{2p+1},\\[1mm]
[v_i^{2p},f]=v_{i+1}^{2p},& [f,v_i^{2p-1}]=v_{i+2}^{2p-1}-v_{i+1}^{2p}+v_{i}^{2p+1},\\[1mm]
[v_i^{2p},e]=-i(n-4p+1-i)v_{i-1}^{2p},& [e,v_i^{2p}]=(n-2p-i+3)((n-2p-i+4)v_{i}^{2p-1}+iv_{i-1}^{2p})+i(i-1)v_{i-2}^{2p+1},\\[1mm]

[v_i^{2p+1},h]=(n-4p+1-i)v_{i}^{2p+1},& [h,v_i^{2p+1}]=0,\\[1mm]
[v_i^{2p+1},f]=v_{i+1}^{2p+1},& [f,v_i^{2p+1}]=0,\\[1mm]
[v_i^{2p+1},e]=-i(n-4p-1-i)v_{i-1}^{2p+1},& [e,v_i^{2p+1}]=0.\\[1mm]
\end{array}\right.\]

Since
\[\begin{array}{lll}
[v_i^{2p-1},e]+[e,v_i^{2p-1}]=-i(n-4p+5-i)v_{i-1}^{2p-1},\\[1mm]
[v_i^{2p-1},f]+[f,v_i^{2p-1}]=v_{i+1}^{2p-1},\\[1mm]
[v_i^{2p-1},h]+[h,v_i^{2p-1}]=-i(n-4p+5-i)v_{i-1}^{2p-1},
\end{array}\]
we have that $v_i^{2p-1} \in\mathcal{R}(L),$ $1\leq p \leq \big[\frac{k+1}{2}\big],$  $0\leq i\leq  n-4p+4.$
Then according to Lemma \ref{lemm3.1} we have $$[v_i^{2p-1},v_j^{q}]=[v_j^{q},v_i^{2p-1}]=0, \quad 1\leq q\leq  k, \ 0\leq j\leq  n-2p+2.$$

 Put \[[v_i^{2p},v_j^{2q}]=a_{i,j}^{p,q}e+b_{i,j}^{p,q}f+c_{i,j}^{p,q}h,\quad 1\leq p\leq q\leq k, \ 0\leq i\leq n-4p+2, \ 0\leq j\leq n-4q+2.\]

Considering Leibniz superidentities for the elements $[v_0^{2p-1},[v_i^{2p},v_j^{2q}]]$ and $[v_1^{2p-1},[v_i^{2p},v_j^{2q}]]$ we have $[V_{2p},V_{2q}]=0.$ Which implies $[L_1,L_1]=0.$

\end{proof}

\begin{prop} Let $L=\mathfrak{sl}_2\oplus L_1$ ba a Leibniz superalgebra such that $L_1$ is a bimodule isomorphic to $M_4$. Then
$[L_1,L_1]=0.$
\end{prop}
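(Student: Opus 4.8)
The plan is to mirror the structure of the preceding proofs, exploiting the explicit multiplication table for $M_4$. First I would write out the bracket table for $L=\mathfrak{sl}_2\oplus L_1$ in the adapted basis $\{e,f,h,v_i^j\}$ coming from $M_4$. The key preliminary observation, as in the $M_3$ case, is that certain basis vectors of $L_1$ lie in the right annihilator $\mathcal{R}(L)$. Looking at the $M_4$ table, the modules $V_{2p}$ (the "even-indexed" pieces) have trivial left action by $\mathfrak{sl}_2$, so for those vectors $[v_i^{2p},x]+[x,v_i^{2p}]=[v_i^{2p},x]$ reproduces the right action and one checks directly that $v_i^{2p}\in\mathcal{R}(L)$; by Lemma \ref{lemm3.1} this already forces $[v_i^{2p},v_j^q]=[v_j^q,v_i^{2p}]=0$ for all $q$. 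So the only possibly nonzero brackets are among the odd-indexed modules $V_1,V_3,V_5,\dots$, i.e. brackets $[v_i^{2p-1},v_j^{2q-1}]$ landing in $\mathfrak{sl}_2$.

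Next I would set $[v_i^{2p-1},v_j^{2q-1}]=a_{i,j}^{p,q}e+b_{i,j}^{p,q}f+c_{i,j}^{p,q}h$ and run the Leibniz superidentity with one argument in $\mathfrak{sl}_2$. Applying it to $[h,[v_i^{2p-1},v_j^{2q-1}]]$, $[f,[\,\cdot\,]]$ and $[e,[\,\cdot\,]]$, and comparing coefficients of $e,f,h$, produces three families of linear constraints exactly analogous to \eqref{eq_3.1}--\eqref{eq_3.3} (respectively \eqref{eq_3.6}--\eqref{eq_3.8} and \eqref{eq_3.9}--\eqref{eq_3.11}): a "weight" constraint forcing $a,b,c$ to vanish unless the index sums hit a fixed value, a "lowering" recursion from $f$, and a "raising" recursion from $e$. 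The only subtlety is that $v_i^{2p-1}$ and $v_j^{2q-1}$ carry $h$-weights $n-4p+2-2i$ and $n-4q+2-2j$ coming from two different simple summands of different dimensions, so the weight-matching condition reads $(i+j) + 2(p+q) = \text{const}$ rather than $i+j=\text{const}$; I would record these constants carefully for the $e,f,h$ components and note that otherwise the argument is formally identical to Proposition \ref{prop3.4}.

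Then, exactly as in Proposition \ref{prop3.4}, the first equation of the $f$-family gives an alternating recursion $a_{i,\,\ast}^{p,q}=(-1)^i a_{0,\ast}^{p,q}$ along the relevant diagonal, while the symmetry $[v_i^{2p-1},v_j^{2q-1}]=[v_j^{2q-1},v_i^{2p-1}]$ from Lemma \ref{lemm3.1} gives an independent equality of the two endpoints of that diagonal. Comparing the two forces the $a$'s to vanish whenever the diagonal has even length; for the remaining parity I would invoke an auxiliary Leibniz superidentity of the form $[v_{m}^{2p-1},[v_1^{2q-1},v_1^{2q-1}]]-2[[v_m^{2p-1},v_1^{2q-1}],v_1^{2q-1}]=0$ (choosing $m$ so that the relevant coefficient survives), just as was done for $n=3$ and $n\ge4$ in Proposition \ref{prop3.4}, to kill the last $a$. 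Once all $a_{i,j}^{p,q}=0$, the raising and lowering recursions (the $b$- and $c$-equations of the $e$- and $f$-families) collapse and give $b_{i,j}^{p,q}=c_{i,j}^{p,q}=0$ as well. Hence $[v_i^{2p-1},v_j^{2q-1}]=0$ for all indices, and combined with the annihilator vanishing from the first step we conclude $[L_1,L_1]=0$.

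The main obstacle I anticipate is purely bookkeeping: in $M_4$ the summand $V_1$ is singled out (it has nontrivial left action of a different shape than the $V_{2p+1}$ for $p\ge1$), so the coefficient formulae in the Leibniz superidentities for brackets involving $V_1$ differ from those involving the higher odd summands, and one must check that the vanishing argument goes through uniformly in all the cases $p=q=1$, $p=1<q$, and $1<p\le q$. I do not expect any genuinely new phenomenon — the weight constraint is always nondegenerate for $\dim L_1\ge 3$, which is what made Propositions \ref{prop3.4} and the $M_1,M_2,M_3$ cases work — but the case split and the exact index ranges $0\le i\le n-4p+2$, $0\le j\le n-4q+2$ need to be handled with care so that the "endpoint comparison" and the auxiliary superidentity are applied only where the indices are legal.
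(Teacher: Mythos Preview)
Your proposal is correct but takes a longer route than the paper.  Both arguments start the same way: you correctly identify that in $M_4$ the even-indexed summands $V_{2p}$ have trivial left $\mathfrak{sl}_2$-action, hence $v_i^{2p}\in\mathcal{R}(L)$, and then Lemma \ref{lemm3.1} kills every bracket involving an even-indexed basis vector.  From that point on, however, the paper does not re-run the weight/recursion analysis of Proposition \ref{prop3.4}.  Instead, exactly as in the $M_3$ proof it refers to, it plugs a right-annihilator vector into the outer slot of the superidentity: writing $[v_i^{2r-1},v_j^{2s-1}]=ae+bf+ch$, the identity
\[
[v_0^{2p},[v_i^{2r-1},v_j^{2s-1}]]=[[v_0^{2p},v_i^{2r-1}],v_j^{2s-1}]+[[v_0^{2p},v_j^{2s-1}],v_i^{2r-1}]=0
\]
together with the nontrivial \emph{right} action $[v_0^{2p},f]=v_1^{2p}$, $[v_0^{2p},h]=(n-4p+2)v_0^{2p}$ forces $b=c=0$, and the companion identity with $v_1^{2p}$ (using $[v_1^{2p},e]\neq0$) then forces $a=0$.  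This dispatches all the remaining brackets in one stroke, without any alternating recursion, endpoint comparison, parity split, or auxiliary superidentity.

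Your approach --- feeding $h,f,e$ into the superidentity and deriving analogues of \eqref{eq_3.1}--\eqref{eq_3.3} for each pair $(p,q)$ --- also works, and you correctly anticipate that the $V_{2p}$-components of $[h,v_i^{2p-1}]$ etc.\ drop out after bracketing with $L_1$; but it is considerably heavier, precisely because of the case split $p=q=1$, $p=1<q$, $1<p\le q$ and the small-dimension endpoint checks you flag.  (Minor point: the $h$-weight of $v_i^{2p-1}$ in $M_4$ is $n-4p+4-2i$, not $n-4p+2-2i$.)  The paper's trick trades all of that bookkeeping for a single use of an annihilator vector with nonzero right action, which is worth internalising as the cleaner device here.
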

\begin{proof}
The proof of this Proposition is proved similarly to the previous one.
\end{proof}



\end{document}